\def \vec#1{{\bf{#1}}}
\newcommand{\bi}{\begin{itemize}}
\newcommand{\ei}{\end{itemize}}
\newcommand{\diverg}{\vec{\nabla}\cdot}
\newcommand{\director}{\vec{n}}
\newcommand{\curl}{\vec{\nabla}\times}
\newcommand{\Ltwoinner}[3]{\langle #1,#2 \rangle_0}
\newcommand{\Ltwonorm}[2]{\Vert #1 \Vert_0}
\newcommand{\Ltwoinnerndim}[4]{\langle #1,#2 \rangle_0}
\newcommand{\Rone}{\mathbb{R}}
\newcommand{\diff}[1]{\, d#1}
\newcommand{\Hone}[1]{H^1(#1)}
\newcommand{\Honenot}[1]{H^1_0({#1})}
\newcommand{\Ltwo}[1]{L^2(#1)}
\newcommand{\Lp}[1]{L^p (\Omega)}
\newcommand{\Linfinity}[1]{L^{\infty}(\Omega)}
\newcommand{\triangulation}{\mathcal{T}_h}
\newcommand{\diam}{\text{diam }}
\title{An A Posteriori Error Estimator for Electrically Coupled Liquid Crystal Equilibrium Configurations
}
\author{J.H. Adler\footnotemark[2]
        \and D.B. Emerson\footnotemark[3]}
\shorttitle{An Error Estimator for Electrically Coupled LCs} 
\begin{document}

\maketitle

\renewcommand{\thefootnote}{\fnsymbol{footnote}}

\footnotetext[2]{Tufts University, Medford MA 02155, USA ({\tt james.adler@tufts.edu})}
\footnotetext[3]{Vector Institute, Toronto ON M5G 1M1, Canada ({\tt david.emerson@vectorinstitute.ai})}

\begin{abstract}
This paper derives an a posteriori error estimator for the nonlinear first-order optimality conditions associated with the electrically and flexoelectrically coupled Frank-Oseen model of liquid crystals, building on previous results for elastic systems. The estimator is proposed for a penalty approach to imposing the unit-length constraint required by the model. Moreover, theory is proven establishing that the estimator provides a reliable estimate of global approximation error and an efficient measure of local error, suitable for use in adaptive refinement. Numerical experiments demonstrate significant improvements in efficiency with adaptive refinement guided by the proposed estimator in a multilevel, nested-iteration framework and superior physical properties for challenging electrically coupled systems.
\end{abstract}

\begin{keywords}
liquid crystal simulation, coupled systems, a posteriori error estimators, adaptive mesh refinement
\end{keywords}

\begin{AMS}
76A15, 65N30, 49M15, 65N22, 65N55
\end{AMS}

\section{Introduction}

As materials possessing mesophases with characteristics of both liquids and organized solids, liquid crystals exhibit many interesting physical properties inspiring extensive study and a wide range of applications. In addition to considerable use in modern display technologies, liquid crystals are used for nanoparticle organization \cite{Lagerwall1},  manufacture of structured nanoporous solids \cite{Wan1}, and efficient conversion of mechanical strain to electrical energy \cite{Harden1}, among many others.

The focus of this paper is nematic liquid crystals, which are rod-like molecules with long-range orientational order described by a vector field $\director(x, y, z) = (n_1, n_2, n_3)^T$, referred to as the director. For the model considered here, $\director$ is constrained to unit-length pointwise throughout the domain, $\Omega$. In addition to their elastic properties, liquid crystals are dielectrically active such that their structures are affected by the presence of electric fields. In addition, certain types of liquid crystals demonstrate flexoelectric coupling wherein deformations of the director produce internally generated electric fields \cite{Meyer1}. Thorough treatments of liquid crystal physics are found in \cite{Stewart1, Virga1}.

With the combination of highly-coupled physics and complicated experimental behavior, numerical simulations of liquid crystal structures are fundamental to the study of novel physical phenomena, optimization of device design, and analysis of experimental observations \cite{Emerson5, Clerc1, RojasGomez1}. As many applications and experiments require simulations on two- and three-dimensional domains with complicated boundary conditions, the development of highly efficient and accurate numerical approaches is important. Effective a posteriori error estimators significantly increase the efficiency of numerical methods for partial differential equations (PDEs) and variational systems by guiding the construction of optimal discretizations via adaptive refinement. A wealth of research exists for the design and theoretical support of effective error estimators in the context of finite-element methods. This includes techniques treating both linear and nonlinear PDEs across a number of applications (see, e.g., \cite{John4, Oden1, Verfurth3, Bank1, Babuska2}).

In \cite{Emerson6}, a reliable a posterior error estimator was developed for the first-order optimality conditions arising from minimization of the Frank-Oseen elastic free-energy model. Using the estimator to guide adaptive mesh refinement (AMR) in numerical simulations, produced competitive solutions in terms of constraint conformance and free energy with considerably less computational work. In this paper, the elastic error estimator is extended to consider systems with electric and flexoelectric coupling. The proposed, coupled, a posteriori error estimator is shown to be a reliable estimate of global approximation error and an efficient indicator of local error. The incorporation of electric and flexoelectric fields produce a markedly more complicated estimator and auxiliary spaces requiring careful theoretical treatment. In certain cases, modifications of the original theory in \cite{Emerson6} are straightforward. In others, however, important adjustments must be made to ensure that the framework of \cite{Verfurth1, Verfurth2} remains viable. Numerical experiments leveraging a multilevel nested-iteration framework for problems with both external and flexoelectrically induced electric fields demonstrate the performance of the estimator compared with uniform refinement.

This paper is organized as follows. In Section \ref{model}, the coupled Frank-Oseen free-energy model and associated variational system for the first-order optimality conditions is introduced. Additional notation and prerequisite theoretical results to be applied in the reliability and efficiency proofs are discussed in Section \ref{preliminaries}. In Section \ref{theory}, the specific coupled error estimator is derived and proofs of reliability and efficiency are constructed. Section \ref{numerics} presents numerical experiments demonstrating the performance of the error estimator and a multilevel AMR approach. Finally, Section \ref{conclusions} provides some concluding remarks and a discussion of future work.

\section{Free-Energy Model and Optimality Conditions} \label{model}

Liquid crystals are simulated using a number of different models \cite{Davis1, Onsager1, Gartland1}. Here, the Frank-Oseen free-energy model is considered where, for a domain $\Omega$, the coupled equilibrium free energy is composed of three main components associated with elastic deformations, external electric fields, and flexoelectrically generated fields. 
The coupled free-energy functional is then written
\begin{align}
\mathcal{G}(\director, \phi) &= \frac{1}{2}K_1 \Ltwonorm{\diverg \director}{\Omega}^2 + \frac{1}{2}K_3\Ltwoinnerndim{\vec{Z} \curl \director}{\curl \director}{\Omega}{3}  - \frac{1}{2}\epsilon_0\epsilon_{\perp}\Ltwoinnerndim{\nabla \phi}{\nabla \phi}{\Omega}{3} \label{flexofunctional} \\
&\hspace{0.35cm} - \frac{1}{2}\epsilon_0 \epsilon_a \Ltwoinner{\director \cdot \nabla \phi}{\director \cdot \nabla \phi}{\Omega} + e_s \Ltwoinner{\diverg \director}{\director \cdot \nabla \phi}{\Omega} + e_b\Ltwoinnerndim{\director \times \curl \director}{\nabla \phi}{\Omega}{3}, \nonumber
\end{align}
where we denote the classical $\Ltwo{\Omega}$ inner product and norm as $\Ltwoinner{\cdot}{\cdot}{\Omega}$ and $\Vert \cdot \Vert_0$, respectively, for both scalar and vector quantities. The variable $\phi$ in \eqref{flexofunctional} denotes the electric potential for an electric field, $\vec{E}$, such that $\vec{E} = \nabla \phi$. Here, $K_i \geq 0$, $i=1,2, 3$ are the Frank constants, which depend on the physical characteristics of the liquid crystal and have a significant impact on orientational structure \cite{Atherton2, Lee1}. Assuming that each $K_i \neq 0$, we define the tensor $\vec{Z} = \vec{I} - (1-\kappa) \director \otimes \director$, where $\kappa = K_2/K_3$. The permittivity of free space is denoted by $\epsilon_0 > 0$. The dielectric anisotropy of the liquid crystal is $\epsilon_a = \epsilon_{\parallel} - \epsilon_{\perp}$, with the constants $\epsilon_{\parallel}, \epsilon_{\perp} > 0$ representing the parallel and perpendicular dielectric permittivity, respectively. For positive $\epsilon_a$, the director favors parallel alignment with the electric field, while negative anisotropy indicates a perpendicular preference. Finally, $e_s$ and $e_b$ are material constants specifying the liquid crystal's flexoelectric response. 

Equilibrium states correspond to configurations that minimize the functional in \eqref{flexofunctional} subject to the local unit-length constraint, $\director \cdot \director -1 = 0$, on $\Omega$. Additionally, the relevant Maxwell's equations for a static electric field, $\diverg \vec{D} = 0$ and $\curl \vec{E} = \vec{0}$, known as Gauss' and Faraday's laws, respectively, must be satisfied. For this system, 
\begin{align*}
\vec{D} = -\epsilon_0 \epsilon_{\perp} \nabla \phi - \epsilon_0 \epsilon_a (\director \cdot \nabla \phi) \director + e_s \director (\diverg \director) + e_b (\director \times \curl \director).
\end{align*}
Note that the use of an electric potential implies that Faraday's law is trivially satisfied, and it is straightforward to show that a minimizing pair, $(\director_*, \phi_*)$, adhering to the unit-length constraint, satisfies Gauss' law in weak form.
For a full derivation of the functional in \eqref{flexofunctional}, see \cite{Emerson2, Emerson5}. Throughout this paper, the presence of Dirichlet boundary conditions is assumed. Therefore the functional has been simplified using the null Lagrangian discussed in \cite{Stewart1}. Moreover, the free-energy expression has been non-dimensionalized using the approach detailed in \cite{Emerson3}.

Throughout this paper, it is assumed that $\director \in \left (H_{\vec{g}_1}^1(\Omega)\right )^3 = \{ \vec{v} \in \left (H^1(\Omega)\right )^3 : \vec{v} = \vec{g}_1 \text{ on } \partial \Omega \}$ and $\phi \in H_{g_2}^1(\Omega) = \{ \psi \in H^1(\Omega) : \psi = g_2 \text{ on } \partial \Omega \}$, where $H^1(\Omega)$ denotes the standard Sobolev space with norm $\Vert \cdot \Vert_1$. The boundary functions $\vec{g}_1$ and $g_2$ are assumed to satisfy appropriate compatibility conditions for the domain. Note that if $\vec{g}_1 = \vec{0}$, the space $\left (H^1_{\vec{g}_1}(\Omega)\right )^3 = \left (H^1_0(\Omega)\right )^3$.

The penalty approach studied in \cite{Emerson2, Emerson3} is used to enforce the pointwise unit-length constraint. The penalty method adds a weighted, positive term to the free-energy functional, penalizing deviation from the constraint such that for $\zeta > 0$
\begin{align*}
\mathcal{H}(\director, \phi) = \mathcal{G}(\director, \phi) + \frac{1}{2}\zeta \Ltwoinner{\director \cdot \director -1}{\director \cdot \director -1}{\Omega}.
\end{align*}
Taking the first variation of $\mathcal{H}(\director, \phi)$, the first-order optimality conditions are written
\begin{align}
\mathcal{P}(\director, \phi) = \mathcal{C}(\director, \phi) + 2\zeta \Ltwoinner{\vec{v} \cdot \director}{\director \cdot \director -1}{\Omega}  = 0 && \forall (\vec{v}, \psi) \in H^1_0(\Omega)^3 \times H^1_0(\Omega), \label{PenaltyFOOC}
\end{align}
where
\begin{align*}
&\mathcal{C}(\director, \phi) = K_1\Ltwoinner{\diverg \director}{\diverg \vec{v}}{\Omega} + K_3\Ltwoinnerndim{\vec{Z} \curl \director}{\curl \vec{v}}{\Omega}{3} - \epsilon_0 \epsilon_a \Ltwoinner{\director \cdot \nabla \phi}{\vec{v} \cdot \nabla \phi}{\Omega} \nonumber \\
& \qquad \qquad \qquad+ (K_2-K_3)\Ltwoinner{\director \cdot \curl \director}{\vec{v} \cdot \curl \director}{\Omega}  - \epsilon_0 \epsilon_{\perp}\Ltwoinnerndim{\nabla \phi}{\nabla \psi}{\Omega}{3} - \epsilon_0 \epsilon_a \Ltwoinner{\director \cdot \nabla \phi}{\director \cdot \nabla \psi}{\Omega} \nonumber \\
& \qquad \qquad \qquad + e_s\big( \Ltwoinner{\diverg \director}{\vec{v} \cdot \nabla \phi}{\Omega} + \Ltwoinner{\diverg \vec{v}}{\director \cdot \nabla \phi}{\Omega} \big)  + e_b\big( \Ltwoinnerndim{\director \times \curl \vec{v}}{\nabla \phi}{\Omega}{3} \nonumber \\
&\qquad \qquad \qquad+ \Ltwoinnerndim{\vec{v} \times \curl \director}{\nabla \phi}{\Omega}{3} \big) + e_s \Ltwoinner{\diverg \director}{\director \cdot \nabla \psi}{\Omega} +e_b  \Ltwoinnerndim{\director \times \curl \director}{\nabla \psi}{\Omega}{3}.
\end{align*}
In \cite{Emerson6}, a posteriori error estimators were proposed for the first-order optimality conditions of purely elastic liquid crystal systems. Below, the elastic estimator is extended to include electric and flexoelectric coupling. Moreover, this estimator is shown to be both reliable and locally efficient.

\section{Preliminary Theory and Notation} \label{preliminaries}

In this section, some additional notation and requisite existing theoretical results used in subsequent sections are discussed. For the theory to follow, it is assumed that the domain $\Omega$ is open and connected, with a polyhedral boundary. For any open subset $\omega \subset \Omega$ with Lipschitz boundary, norms restricted to the subdomain are denoted with an index as $\Vert \cdot \Vert_{1, \omega}$ and $\Vert \cdot \Vert_{0, \omega}$. Let $\{\triangulation\}$, $0 < h \leq 1,$ be a shape-regular family of meshes subdividing $\Omega$. That is, there exists a $\rho > 0$ such that, for all $T \in \triangulation$ and $h \in (0, 1]$,
\begin{align}
\max \{\diam T : T \in \triangulation\} \leq h \, \diam \Omega, & & \diam B_T \geq \rho  \, \diam T \label{shaperegular},
\end{align}
where $B_T$ is the largest ball contained in $T$ such that $T$ is star-shaped with respect to $B_T$. In addition, we assume that any family of meshes satisfies the admissibility condition such that any two cells of $\triangulation$ are either disjoint or share a complete, smooth sub-manifold of their boundaries. For any $T \in \triangulation$, let $h_T = \diam T$, denote the set of edges of $T$ as $\mathcal{E}(T)$, and let $h_E = \diam E$ for $E \in \mathcal{E}(T)$. It is also assumed that the mesh family is fine enough that $h_T, h_E \leq 1$ for each mesh. Note that the shape-regularity conditions of \eqref{shaperegular} ensures that the ratio $h_T/h_E$ is bounded above and below by constants independent of $h$, $T$, and $E$ and implies that the smallest angle of any $T$ is bounded from below by a constant independent of $h$ \cite{Verfurth2}.

The sets of vertices corresponding to $T$ and $E$ are written $\mathcal{N}(T)$ and $\mathcal{N}(E)$, respectively. The set of all edges for $\triangulation$ is written $\mathcal{E}_h = \bigcup_{T \in \triangulation} \mathcal{E}(T)$, and $\mathcal{E}_{h, \Omega}$ signifies the subset of interior edges. Finally, some specific subdomains of $\Omega$ are written
\begin{align*}
\omega_T &= \bigcup_{\mathcal{E}(T) \cap \mathcal{E}(T') \neq \emptyset} T', & \omega_E &= \bigcup_{E \in \mathcal{E}(T')} T',\\
\tilde{\omega}_T &= \bigcup_{\mathcal{N}(T) \cap \mathcal{N}(T') \neq \emptyset} T', & \tilde{\omega}_E &= \bigcup_{\mathcal{N}(E) \cap \mathcal{N}(T') \neq \emptyset} T'.
\end{align*}
For the meshes, define a fixed reference element $\hat{T}$ and reference edge $\hat{E}$ as $\hat{T} = \{\hat{x} \in \mathbb{R}^n : \sum_{i = 1}^n \hat{x}_i \leq 1, \hat{x}_j \geq 0, 1 \leq j \leq n \}$ and $\hat{E} = \hat{T} \cap \{ \hat{x} \in \mathbb{R}^n : \hat{x}_n = 0 \}$. The mesh is assumed to be affine equivalent such that, for any $T \in \triangulation$, there exists an invertible affine mapping from the reference components to $T$. For any $E \in \mathcal{E}_h$, we assign a unit normal vector $\eta_E$. By convention, $\eta_E$ coincides with the outward normal for any $E$ on the domain boundary, $\partial \Omega$. Then, for any piecewise continuous function $\psi$, the jump across $E$ in the direction $\eta_E$ is denoted as $[\psi]_E$. Finally, for $k \in \mathbb{N}$, define the finite-dimensional space 
\begin{align*}
S_h^{k, 0} &= \{\psi: \Omega \rightarrow \Rone : \psi \vert_T \in \Pi_k,  \forall T \in \triangulation \} \cap C(\bar{\Omega}),
\end{align*}
where $\Pi_k$ is the set of polynomials of degree at most $k$, $\psi \vert_T$ is the restriction of $\psi$ to the element $T$, and $C(\bar{\Omega})$ is the collection of continuous functions on the closure of $\Omega$.

Making use of the notation and assumptions established above, a collection of important supporting theoretical results is gathered in this section and referenced in the efficiency and reliability theory developed in Section \ref{theory}. Let $I_h: L^1(\Omega) \rightarrow S_h^{1, 0}$ denote the Cl\'{e}ment interpolation operator \cite{Clement1, Verfurth1}. Then, the following approximation error bound holds for $\triangulation$.

\begin{lemma} \label{clementlemma}
For any $T \in \triangulation$ and $E \in \mathcal{E}_h$
\begin{align*}
\Vert \psi - I_h \psi \Vert_{0, T} &\leq C_1 h_T \Vert \psi \Vert_{1, \tilde{\omega}_T} & \forall \psi \in \Hone{\tilde{\omega}_T}, \\
\Vert \psi - I_h \psi \Vert_{0, E} &\leq C_2 h_E^{1/2} \Vert \psi \Vert_{1, \tilde{\omega}_E} & \forall \psi \in \Hone{\tilde{\omega}_E},
\end{align*}
where $C_1$ and $C_2$ depend only on the shape-regularity condition in \eqref{shaperegular}. 
\end{lemma}

Following the notation in \cite{Verfurth1, Verfurth2}, let $\Psi_{\hat{T}}, \Psi_{\hat{E}} \in C^{\infty}(\hat{T}, \Rone)$ be cut-off functions defined on the reference components $\hat{T}$ and $\hat{E}$ such that
\begin{align*}
&0 \leq \Psi_{\hat{T}} \leq 1, \quad \max_{\hat{x} \in \hat{T}} \Psi_{\hat{T}}(\hat{x}) = 1, \quad \Psi_{\hat{T}} = 0 \text{ on } \partial \hat{T}, \\
&0 \leq \Psi_{\hat{E}} \leq 1, \quad \max_{\hat{x} \in \hat{E}} \Psi_{\hat{E}}(\hat{x}) = 1, \quad \Psi_{\hat{E}} = 0 \text{ on } \partial \hat{T} \backslash \hat{E}.
\end{align*}
Define a continuation operator $\hat{P}: L^{\infty}(\hat{E}) \rightarrow L^{\infty}(\hat{T})$ as 
\begin{align*}
\hat{P} \hat{u}(\hat{x}_1, \ldots, \hat{x}_n) := \hat{u}(\hat{x}_1, \ldots, \hat{x}_{n-1}),
\end{align*}
for all $\hat{x} \in \hat{T}$, and fix two arbitrary finite-dimensional subspaces, $V_{\hat{T}} \subset L^{\infty}(\hat{T})$ and $V_{\hat{E}} \subset L^{\infty}(\hat{E})$. Applying the affine mappings from reference components, corresponding functions, $\Psi_T$ and $\Psi_E$, operator $P: L^{\infty}(E) \rightarrow L^{\infty}(T)$, and spaces $V_T$ and $V_E$ are defined for arbitrary $T \in \triangulation$ and $E \in \mathcal{E}_h$ with analogous properties. Thus, the following lemma and corollary hold, c.f. \cite{Verfurth1, Verfurth2, Brenner1}.
\begin{lemma} \label{cutoffinequalities}
There are constants $C_1, \ldots, C_7$ depending only on the finite-dimensional spaces $V_{\hat{T}}$ and $V_{\hat{E}}$, the functions $\Psi_{\hat{T}}$ and $\Psi_{\hat{E}}$, and  the shape-regularity bounds of \eqref{shaperegular} such that for all $T \in \triangulation$, $E \in \mathcal{E}(T)$, $u \in V_T$, and $\sigma \in V_E$
\begin{align}
C_1 \Vert u \Vert_{0, T} &\leq \sup_{v \in V_T} \frac{\int_T u \Psi_T v \diff{V}}{\Vert v \Vert_{0, T}} \leq \Vert u \Vert_{0, T}, \label{cutoff1}\\
C_2 \Vert \sigma \Vert_{0, E} &\leq \sup_{\tau \in V_E} \frac{\int_E \sigma \Psi_E \tau \diff{S}}{\Vert \tau \Vert_{0, E}} \leq \Vert \sigma \Vert_{0, E}, \label{cutoff2}\\
C_3 h_T^{-1} \Vert \Psi_T u \Vert_{0, T} &\leq \Vert \nabla (\Psi_T u) \Vert_{0, T}  \leq C_4 h_T^{-1} \Vert \Psi_T u \Vert_{0, T}, \nonumber \\
C_5 h_T^{-1} \Vert \Psi_E P\sigma \Vert_{0, T} &\leq \Vert \nabla (\Psi_E P \sigma) \Vert_{0, T}  \leq C_6 h_T^{-1} \Vert \Psi_E P \sigma \Vert_{0, T}, \nonumber \\
\Vert \Psi_E P \sigma \Vert_{0, T} &\leq C_7 h_T^{1/2} \Vert \sigma \Vert_{0, E}. \label{cutoff5}
\end{align}
\end{lemma}
Note that with shape-regularity of the mesh, after proper adjustment of $C_i$ in any of the above inequalities, the mesh constant $h_T$ may be exchanged for $h_E$ while maintaining the inequality.
\begin{corollary} \label{cutoffexpansion}
Under the assumptions of Lemma \ref{cutoffinequalities}, there exist $\bar{C}_4, \bar{C}_6 > 0$, independent of $h$, such that
\begin{align}
\Vert \Psi_T u  \Vert_{1,T} &\leq \bar{C}_4 h_T^{-1} \Vert \Psi_T u \Vert_{0, T}, \label{expansion1} \\
\Vert \Psi_E P \sigma \Vert_{1, T} &\leq \bar{C}_6 h_T^{-1} \Vert \Psi_E P \sigma \Vert_{0, T}. \label{expansion2}
\end{align}
\end{corollary}

Finally, we state two key propositions from the framework developed by Verf\"{u}rth \cite{Verfurth1, Verfurth2}. Let $X$ and $Y$ be Banach spaces with norms $\Vert \cdot \Vert_X$ and $\Vert \cdot \Vert_Y$ and denote the space of continuous linear maps from $X$ to $Y$ as $\mathcal{L}(X, Y)$ with the natural operator norm $\Vert \cdot \Vert_{\mathcal{L}(X, Y)}$. The subset of linear homeomorphisms from $X$ to $Y$ is written $\text{Isom}(X, Y)$. Define $Y^* = \mathcal{L}(Y, \Rone)$ to be the dual space of $Y$, with norm $\Vert \cdot \Vert_{Y^*}$, where the associated duality pairing is written $\langle \cdot, \cdot \rangle$. Let $F \in C^1(X, Y^*)$ be a continuously differentiable function for which a solution $u \in X$ is sought such that $F(u) = 0$. Denoting the derivative of $F$ as $DF$ and a ball of radius $R > 0$ centered at $u \in X$ as $B(u, R) = \{ v \in X : \Vert u - v \Vert_X < R \}$, the first proposition is as follows.
\begin{proposition}[\hspace{-4pt} {\cite[Pg. 47]{Verfurth2}}] \label{nonlinearErrorEstimation}
Let $u_0 \in X$ be a regular solution to $F(u) = 0$ in the sense that $DF(u_0) \in \text{Isom}(X, Y^*)$. Assume that $DF$ is Lipschitz continuous at $u_0$, where there exists an $R_0 > 0$ such that
\begin{align*}
\gamma = \sup_{u \in B(u_0, R_0)} \frac{\Vert DF(u) - DF(u_0) \Vert_{\mathcal{L}(X, Y^*)}}{\Vert u - u_0 \Vert_X} < \infty.
\end{align*}
Set $R = \min \big \{ R_0, \gamma^{-1} \Vert DF(u_0)^{-1} \Vert^{-1}_{\mathcal{L}(Y^*, X)}, 2 \gamma^{-1} \Vert DF(u_0) \Vert_{\mathcal{L}(X, Y^*)} \big \}$.
Then, the error estimate
\begin{align*}
\frac{1}{2} \Vert DF(u_0) \Vert_{\mathcal{L}(X, Y^*)}^{-1} \Vert F(u) \Vert_{Y^*} \leq \Vert u - u_0 \Vert_X \leq 2 \Vert DF(u_0)^{-1} \Vert_{\mathcal{L}(Y^*, X)} \Vert F(u) \Vert_{Y^*},
\end{align*}
holds for all $u \in B(u_0, R)$.
\end{proposition}

Let $X_h \subset X$ and $Y_h \subset Y$ be finite-dimensional subspaces and $F_h \in C(X_h, Y_h^*)$ be an approximation of $F$. Consider the discretized problem of finding $u_h \in X_h$ such that $F_h(u_h) = 0$.
\begin{proposition}[\hspace{-4pt} {\cite[Pg. 52]{Verfurth2}}] \label{auxiliarySpaceInequality}
Let $u_h \in X_h$ be an approximate solution to the discretized problem in the sense that $\Vert F_h(u_h) \Vert_{Y_h^*}$ is approximately zero. Assume that there is a restriction operator $R_h \in \mathcal{L}(Y, Y_h)$, a finite-dimensional space $\tilde{Y}_h \subset Y$, and an approximation $\tilde{F}_h : X_h \rightarrow Y^*$ of $F$ at $u_h$ such that 
\begin{align*}
\Vert (\text{Id}_Y - R_h)^*\tilde{F}_h(u_h) \Vert_{Y^*} \leq C_0 \Vert \tilde{F}_h (u_h) \Vert_{\tilde{Y}_h^*},
\end{align*}
where $\text{Id}_Y$ is the identity operator on $Y$, $^*$ indicates application of $(\text{Id}_Y - R_h)$ to the dual variables, and $C_0 > 0$ is independent of $h$. Then the following estimate holds,
\begin{align*}
\Vert F(u_h) \Vert_{Y^*} &\leq C_0 \Vert \tilde{F}_h (u_h) \Vert_{\tilde{Y}^*_h} + \Vert (\text{Id}_Y - R_h)^*[F(u_h) - \tilde{F}_h(u_h)] \Vert_{Y^*} \\
& \hspace{2em} + \Vert R_h \Vert_{\mathcal{L}(Y, Y_h)} \Vert F(u_h) - F_h(u_h) \Vert_{Y_h^*} + \Vert R_h \Vert_{\mathcal{L}(Y, Y_h)} \Vert F_h(u_h) \Vert_{Y_h^*}.
\end{align*}
\end{proposition}
The first result provides an approximation error bound using the residual, while the second yields a concrete set of terms bounding the residual from above. 

\section{A Reliable and Efficient Coupled Error Estimator} \label{theory}

In this section, an a posteriori error estimator is proposed for the first-order optimality conditions of Section \ref{model}, extending the results of \cite{Emerson6} to include electric and flexoelectric coupling. Furthermore, using the theory outlined in the previous section, the estimator is shown to be a reliable estimate of global approximation error and an efficient indicator of local error, suitable for use in AMR schemes. 

To begin, consider the first-order optimality conditions for the penalty method in \eqref{PenaltyFOOC}. Let $Y = X_0 = \left (\Honenot{\Omega}\right )^3 \times \Honenot{\Omega}$ and $X = \left (H_{\vec{g}_1}^1(\Omega)\right )^3 \times H_{g_2}^1(\Omega)$. Then, $\mathcal{P}(\director, \phi) \in C^1(X, Y^*)$, and the Dirichlet boundary conditions imply that for a fixed $(\director, \phi) \in X$, $D\mathcal{P}(\director, \phi): X_0 \rightarrow Y^*$. In discretizing the variational system, we consider general discrete spaces 
\begin{align*}
[S_h^{1, 0}]^3 \subset V_h \subset [S_h^{s, 0}]^3, & & [S_h^{1, 0}] \subset Q_h \subset [S_h^{t, 0}],
\end{align*}
for $s, t \geq 1$ and the finite-dimensional space $Y_h = \{ (\vec{v}_h, \psi_h) \in V_h \times Q_h : \vec{v}_h = \vec{0} \text{ and } \psi_h = 0 \text{ on } \partial \Omega \}$. For the theory presented here, we assume that the imposed boundary conditions on $(\director, \phi)$ are exactly representable on the coarsest mesh of $\{\triangulation\}$. Observe that this assumption on the boundary conditions admits projection of the boundary functions $\vec{g}_1$ and $g_2$ onto the coarsest mesh. Thus, the analysis to follow concerns estimation of the error arising in solution approximations on the interior of $\Omega$ but not from approximation of the boundary conditions. Hence, set $X_h = (V_h \times Q_h \cap X)$. Note that in the numerical results below, any boundary condition functions are interpolated with mesh refinement.

For $(\vec{v}, \psi) \in Y$ and $\langle \mathcal{P}(\director, \phi), (\vec{v}, \psi) \rangle$ define the discrete approximation
\begin{align*}
\langle \mathcal{P}_h(\director_h, \phi_h), (\vec{v}_h, \psi_h) \rangle = \langle \mathcal{P}(\director_h, \phi_h), (\vec{v}_h, \psi_h) \rangle,
\end{align*}
for $(\director_h, \phi_h) \in X_h, (\vec{v}_h, \psi_h) \in Y_h$. For the remainder of this section, assume that the pair $(\director_h, \phi_h)$ is a solution to the discrete problem
\begin{align}
\mathcal{P}_h(\director_h, \phi_h)=0, && \forall (\vec{v}_h, \psi_h) \in Y_h. \label{discretePenaltyFOOC}
\end{align}
In order to simplify notation, define the vector and scalar quantities
\begin{align*}
\vec{p} &= -K_1 \nabla (\diverg \director_h) + K_3 \curl (\vec{Z}(\director_h) \curl \director_h) + (K_2 - K_3)(\director_h \cdot \curl \director_h) \curl \director_h \\
& \qquad  + 2 \zeta ((\director_h \cdot \director_h - 1) \director_h) - \epsilon_0 \epsilon_a ((\director_h \cdot \nabla \phi_h) \nabla \phi_h) + e_s (\diverg \director_h)\nabla \phi_h \\
& \qquad  - e_s \nabla (\director_h \cdot \nabla \phi_h)  + e_b (\curl \director_h \times \nabla \phi_h) + e_b \curl (\nabla \phi_h \times \director_h), \\
q &= \epsilon_0 \epsilon_{\perp} \Delta \phi_h + \epsilon_0 \epsilon_a \diverg ((\director_h \cdot \nabla \phi_h) \director_h) - e_s \diverg ((\diverg \director_h) \director_h) - e_b \diverg (\director_h \times \curl \director_h), \\
\hat{\vec{p}} & =  [K_1(\diverg \director_h) \eta_E + K_3 (\vec{Z}(\director_h)\curl \director_h) \times \eta_E + e_s (\director_h \cdot \nabla \phi_h) \eta_E + e_b ((\nabla \phi_h \times \director_h) \times \eta_E) ]_E,  \\
\hat{q} &=[ - \epsilon_0 \epsilon_{\perp} (\nabla \phi_h \cdot \eta_E) - \epsilon_0 \epsilon_a (\director_h \cdot \nabla \phi_h)(\director_h \cdot \eta_E) + e_s ((\diverg \director_h) \director_h) \cdot \eta_E  + e_b (\director_h \times \curl \director_h) \cdot \eta_E ]_E,
\end{align*}
where $E \in \mathcal{E}_{h, \Omega}$. Integrating $\langle \mathcal{P}(\director_h, \phi_h), (\vec{v}, \psi) \rangle$ by parts elementwise for each $T \in \triangulation$, using the fact that $\vec{v}_h$ and $\psi_h$ are zero on the boundary, and gathering terms yields
\begin{align}
\langle \mathcal{P}(\director_h, \phi_h), (\vec{v}, \psi) \rangle &= \sum_{T \in \triangulation} \int_T \vec{p} \cdot \vec{v} \diff{V} + \int_T q \cdot \psi \diff{V} + \sum_{E \in \mathcal{E}_{h,  \Omega}}  \int_E \hat{\vec{p}} \cdot \vec{v} \diff{S} + \int_E \hat{q} \cdot \psi \diff{S}. \label{penaltyIntegrationByParts}
\end{align}
This form suggests a local estimator,
\begin{align*}
\Theta_T &= \Bigg \{ h_T^2 \left( \Vert \vec{p} \Vert_{0, T}^2 + \Vert q \Vert_{0, T}^2 \right) + \sum_{E \in \mathcal{E}(T) \cap \mathcal{E}_{h, \Omega}} h_E \left ( \Vert \hat{\vec{p}} \Vert_{0, E}^2 + \Vert \hat{q} \Vert_{0, E}^2 \right) \Bigg \}^{1/2},
\end{align*}
for any $T \in \triangulation$. Note that if no external electric field or flexoelectric coupling is present, $\Theta_T$ collapses to the elastic estimator of \cite{Emerson6}. In addition, the quantity $\Vert q \Vert_{0, T}$ locally measures the solution's conformance to the strong form of Gauss' law.

Let $R_h: Y \rightarrow Y_h$ be a restriction operator such that $R_h(\vec{u}, \varphi) = (I_h u_1, I_h u_2, I_h u_3, I_h \varphi)$ where $I_h$ is the Cl\'{e}ment operator of Lemma \ref{clementlemma}. Further, as no forcing function or Neumann boundary conditions are present, set
\begin{align*}
\langle \tilde{\mathcal{P}}_h(\director_h, \phi_h), (\vec{v}, \psi) \rangle = \langle \mathcal{P}(\director_h, \phi_h), (\vec{v}, \psi) \rangle.
\end{align*}
This definition, along with that of the discrete approximation above Equation \eqref{discretePenaltyFOOC}, implies that 
\begin{align}
\Vert (\text{Id}_Y - R_h)^*[\mathcal{P}(\director_h, \phi_h) - \tilde{\mathcal{P}}_h(\director_h, \phi_h)]_{Y^*} &= 0, \label{zeroComponent1} \\
\Vert \mathcal{P}(\director_h, \phi_h) - \mathcal{P}_h(\director_h, \phi_h) \Vert_{Y_h^*} &= 0. \label{zeroComponent2}
\end{align}
With the above definitions, the following lemma holds.
\begin{lemma} \label{restrictionUpperBound}
There exists a constant $C > 0$, independent of $h$, such that 
\begin{align*}
\Vert (\text{Id}_Y - R_h)^*\tilde{\mathcal{P}}_h(\director_h, \phi_h) \Vert_{Y^*} \leq C \left( \sum_{T \in \triangulation} \Theta_T^2 \right)^{1/2}.
\end{align*}
\end{lemma}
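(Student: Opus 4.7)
The plan is to estimate the dual norm by testing against an arbitrary $(\vec{v}, \psi) \in Y$ of unit $Y$-norm. Unpacking the definition of the adjoint together with \eqref{tildePdefinition} gives $\langle (\text{Id}_Y - R_h)^* \tilde{\mathcal{P}}_h(\director_h, \phi_h), (\vec{v}, \psi) \rangle = \langle \mathcal{P}(\director_h, \phi_h), (\text{Id}_Y - R_h)(\vec{v}, \psi) \rangle$, so the task reduces to bounding the right-hand side by $C\,(\sum_T \Theta_T^2)^{1/2}\,\Vert(\vec{v}, \psi)\Vert_Y$ and then taking the supremum.

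First I would substitute the elementwise integration-by-parts identity established in the paragraph preceding the lemma, rewriting the pairing as $\sum_{T \in \triangulation} \int_T \bigl(\vec{p}\cdot\vec{v}^* + q\,\psi^*\bigr) \, dV + \sum_{E \in \mathcal{E}_{h,\Omega}} \int_E \bigl(\hat{\vec{p}}\cdot\vec{v}^* + \hat{q}\,\psi^*\bigr) \, dS$, with $(\vec{v}^*, \psi^*) := (\text{Id}_Y - R_h)(\vec{v}, \psi)$. Applying Cauchy-Schwarz on each element and each edge produces local products of residual norms against the Clément defects. The standard componentwise Clément estimates from \cite{Verfurth4}, namely $\Vert \vec{v}^* \Vert_{0,T} \leq C h_T \Vert \vec{v}\Vert_{1, \tilde{\omega}_T}$, $\Vert \vec{v}^* \Vert_{0,E} \leq C h_E^{1/2}\Vert \vec{v}\Vert_{1, \tilde{\omega}_E}$, together with the analogous bounds for $\psi^*$, introduce precisely the weights $h_T$ and $h_E^{1/2}$ that appear inside $\Theta_T$.

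To close the argument I would apply discrete Cauchy-Schwarz to group the weighted residual norms into $\bigl(\sum_T \Theta_T^2\bigr)^{1/2}$ and the patch seminorms into a factor controlled by $\Vert(\vec{v}, \psi)\Vert_Y$. The latter step uses the finite-overlap property of the patches $\tilde{\omega}_T$ and $\tilde{\omega}_E$, which, under the shape-regularity and admissibility assumptions of Section \ref{assumptions_notations}, yields $\sum_T \Vert\vec{v}\Vert_{1, \tilde{\omega}_T}^2 \leq C \Vert\vec{v}\Vert_{1, \Omega}^2$ and the analogous edge estimate. Dividing by $\Vert(\vec{v}, \psi)\Vert_Y$ and taking the supremum then produces the asserted bound.

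The main obstacle is the bookkeeping in the integration-by-parts step rather than in the dual-norm estimation itself: with elastic, dielectric, and flexoelectric contributions all present in $\mathcal{P}$, one must verify carefully that the strong-form residuals and inter-element jumps reassemble exactly into the $\vec{p}$, $q$, $\hat{\vec{p}}$, and $\hat{q}$ stated in the lemma's preamble, with the correct signs on the curl, divergence, and flexoelectric cross-product jump terms. Once that identity is in hand, the remainder is a structurally standard application of the Clément interpolation machinery of \cite{Verfurth4}.
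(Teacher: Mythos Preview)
Your proposal is correct and follows essentially the same route as the paper: expand the pairing via the elementwise integration-by-parts identity, apply Cauchy--Schwarz together with the Cl\'ement interpolation bounds to introduce the $h_T$ and $h_E^{1/2}$ weights, and then use discrete Cauchy--Schwarz plus the finite-overlap property of $\tilde{\omega}_T$ and $\tilde{\omega}_E$ to absorb the patch norms into $\Vert(\vec{v},\psi)\Vert_Y$. The only difference is presentational---you spell out the adjoint unpacking and the Cl\'ement defect notation $(\vec{v}^*,\psi^*)$ more explicitly---while the paper compresses these into a single displayed inequality.
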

\begin{proof}
First, note that 
\begin{align}
&\Vert (\text{Id}_Y - R_h)^*\tilde{\mathcal{P}}_h(\director_h, \phi_h) \Vert_{Y^*} \nonumber \\
&\qquad = \sup_{\substack{[\vec{v}, \psi] \in Y \\ \Vert [\vec{v}, \psi] \Vert_Y = 1}} \sum_{T \in \triangulation} \sum_{i=1}^3 \int_T p_i \cdot (v_i - I_h v_i) \diff{V} + \int_T q \cdot (\psi - I_h \psi) \diff{V} \nonumber \\ 
& \hspace{8em} + \sum_{E \in \mathcal{E}_{h, \Omega}} \sum_{i = 1}^3 \int_E \hat{p}_i \cdot (v_i - I_h v_i) \diff{S} + \int_E \hat{q} \cdot (\psi - I_h \psi) \diff{S} \nonumber \\
&\qquad \leq \sup_{\substack{[\vec{v}, \psi] \in Y \\ \Vert [\vec{v}, \psi] \Vert_Y = 1}} \sum_{T \in \triangulation} \sum_{i=1}^3 C_1 h_T \Vert p_i \Vert_{0, T} \Vert v_i \Vert_{1, \tilde{\omega}_T} + C_1 h_T \Vert q \Vert_{0, T} \Vert \psi \Vert_{1, \tilde{\omega}_T} \label{CSClementInequality} \\
& \hspace{8em} + \sum_{E \in \mathcal{E}_{h, \Omega}} \sum_{i = 1}^3 C_2 h_E^{1/2} \Vert \hat{p}_i \Vert_{0, E} \Vert v_i \Vert_{1, \tilde{\omega}_E} + C_2 h_E^{1/2} \Vert \hat{q} \Vert_{0, E} \Vert \psi \Vert_{1, \tilde{\omega}_E}, \nonumber
\end{align}
where \eqref{CSClementInequality} is given by applying the Cauchy-Schwarz inequality and Lemma \ref{clementlemma} to the interpolation quantities. Using the Cauchy-Schwarz inequality for sums and letting $\tilde{C} = \max(C_1, C_2)$ implies that
\begin{align*}
& \Vert (\text{Id}_Y - R_h)^*\tilde{\mathcal{P}}_h(\director_h, \phi_h) \Vert_{Y^*} \\
&\qquad\leq \sup_{\substack{[\vec{v}, \psi] \in Y \\ \Vert [\vec{v}, \psi] \Vert_Y = 1}}  \tilde{C} \Bigg ( \sum_{T \in \triangulation} h_T^2 \left( \Vert \vec{p} \Vert_{0, T}^2 + \Vert q \Vert_{0, T}^2 \right) + \sum_{E \in \mathcal{E}_{h, \Omega}} h_E \left( \Vert \hat{\vec{p}} \Vert_{0, E}^2 + \Vert \hat{q} \Vert_{0, E}^2 \right)\Bigg )^{1/2} \\
& \hspace{1.5in} \cdot \Bigg(\sum_{T \in \triangulation} \Vert \vec{v} \Vert_{1, \tilde{\omega}_T}^2 + \Vert \psi \Vert_{1, \tilde{\omega}_T}^2 +  \sum_{E \in \mathcal{E}_{h, \Omega}} \Vert \vec{v} \Vert_{1, \tilde{\omega}_E}^2 + \Vert \psi \Vert_{1, \tilde{\omega}_E}^2\Bigg)^{1/2}.
\end{align*}
Finally, there exists a constant $C_* > 0$ independent of $h$ taking into account repeated elements such that
\begin{align*}
\left (\sum_{T \in \triangulation} \Vert \vec{v} \Vert_{1, \tilde{\omega}_T}^2 + \Vert \psi \Vert_{1, \tilde{\omega}_T}^2 +  \sum_{E \in \mathcal{E}_{h, \Omega}} \Vert \vec{v} \Vert_{1, \tilde{\omega}_E}^2 + \Vert \psi \Vert_{1, \tilde{\omega}_E}^2 \right)^{1/2} \leq C_*  \left \Vert [\vec{v}, \psi] \right \Vert_Y.
\end{align*}
Hence,
\begin{align*}
&\Vert (\text{Id}_Y - R_h)^*\tilde{\mathcal{P}}_h(\director_h, \phi_h) \Vert_{Y^*} \\
&\qquad \leq \sup_{\substack{[\vec{v}, \psi] \in Y \\ \Vert [\vec{v}, \psi] \Vert_Y = 1}} C_* \tilde{C} \Vert [\vec{v}, \psi] \Vert_Y \Bigg( \sum_{T \in \triangulation} h_T^2 \left( \Vert \vec{p} \Vert_{0, T}^2 + \Vert q \Vert_{0, T}^2 \right) + \sum_{E \in \mathcal{E}_{h, \Omega}} h_E \left( \Vert \hat{\vec{p}} \Vert_{0, E}^2 + \Vert \hat{q} \Vert_{0, E}^2 \right) \Bigg)^{1/2} \nonumber \\
&\qquad \leq C \left ( \sum_{T \in \triangulation} \Theta_T^2 \right)^{1/2}.
\end{align*}
The final inequality is obtained by simply noting that the jump components are summed over $E \in \mathcal{E}_{h, \Omega}$.
\end{proof}

Next, define the finite-dimensional auxiliary space $\tilde{Y}_h \subset Y$ as
\begin{align*}
\tilde{Y}_h &= \text{span} \{ [\Psi_T \vec{v}, 0], [\Psi_E P \sigma, 0], [\vec{0}, \Psi_T \psi], [\vec{0}, \Psi_E P \tau] \\
& \hspace{0.8in} : \vec{v} \in [\Pi_{k \vert_T}]^3, \sigma \in [\Pi_{k \vert_E}]^3, \psi \in \Pi_{l \vert_T}, \tau \in \Pi_{l \vert_E}, T \in \triangulation, E \in \mathcal{E}_{h, \Omega} \},
\end{align*}
where $k \geq \max(3s, s+2(t-1))$ and $l \geq 2s + (t-1)$. For this space, the following lemma holds.
\begin{lemma} \label{auxiliarySpaceUpperBound}
There exists a $C > 0$, independent of $h$, such that
\begin{align*}
\Vert \tilde{\mathcal{P}}_h(\director_h, \phi_h) \Vert_{\tilde{Y}_h^*} \leq C \left ( \sum_{T \in \triangulation} \Theta_T^2 \right)^{1/2}.
\end{align*}
\end{lemma}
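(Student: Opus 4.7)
The plan is to evaluate $\langle \tilde{\mathcal{P}}_h(\director_h, \phi_h), (\vec{w}, \xi) \rangle$ on each of the four generator families spanning $\tilde{Y}_h$, bound each contribution by the local terms appearing in $\Theta_T$, and then collect. By \eqref{tildePdefinition}, this action coincides with $\langle \mathcal{P}(\director_h, \phi_h), (\vec{w}, \xi) \rangle$, and the elementwise integration by parts identity derived just before Lemma \ref{restrictionUpperBound} applies. Since $\Psi_T$ vanishes on $\partial T$ and $\Psi_E P\sigma$ is supported in $\omega_E$ with vanishing trace on $\partial \omega_E$, testing against a single generator activates only the volume integral on one element $T$, or the volume-plus-edge integrals on one patch $\omega_E$.

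For the volume generators, I would choose $\vec{v} = \vec{p}|_T$ and $\psi = q|_T$ and test with $(\Psi_T \vec{v}, 0)$ and $(\vec{0}, \Psi_T \psi)$. The degree conditions $k \geq \max(3s, s+2(t-1))$ and $l \geq 2s + (t-1)$ are precisely what is needed for $\vec{p}|_T \in [\Pi_k]^3$ and $q|_T \in \Pi_l$: inspection shows that the maximal degree of $\vec{p}$ is attained by the cubic penalty contribution $2\zeta(\director_h \cdot \director_h - 1)\director_h$ (degree $3s$) and by the dielectric contribution $\epsilon_0 \epsilon_a (\director_h \cdot \nabla \phi_h) \nabla \phi_h$ (degree $s + 2(t-1)$), while the scalar edge jump $\hat{q}|_E$ reaches degree $2s + (t-1)$. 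The bubble-function norm equivalences in \cite[Proposition 2.1]{Verfurth4} then supply constants $c_1, c_2 > 0$, depending only on $k$, $l$, and the shape regularity of $\triangulation$, such that
\begin{align*}
c_1 \|\vec{p}\|_{0,T}^2 \leq \int_T \Psi_T |\vec{p}|^2 \diff{V} = \langle \tilde{\mathcal{P}}_h(\director_h, \phi_h), (\Psi_T \vec{p}, 0) \rangle, \qquad \|\Psi_T \vec{p}\|_{1,T} \leq c_2 h_T^{-1} \|\vec{p}\|_{0,T}.
\end{align*}
Dividing bounds the contribution of this generator to the dual norm by a multiple of $h_T \|\vec{p}\|_{0,T}$; the identical argument for the scalar test function produces $h_T \|q\|_{0,T}$.

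For the edge generators, I would take $\sigma = \hat{\vec{p}}|_E$ and $\tau = \hat{q}|_E$ and invoke the companion bounds from \cite[Proposition 2.1]{Verfurth4}, namely $\|\Psi_E P\sigma\|_{0, T} \leq C h_E^{1/2} \|\sigma\|_{0,E}$, $\|\Psi_E P\sigma\|_{1, \omega_E} \leq C h_E^{-1/2} \|\sigma\|_{0,E}$, and $\|\sigma\|_{0,E}^2 \leq C \int_E \Psi_E |\sigma|^2 \diff{S}$. Rewriting
\begin{align*}
\int_E \Psi_E |\sigma|^2 \diff{S} = \langle \tilde{\mathcal{P}}_h(\director_h, \phi_h), (\Psi_E P\sigma, 0) \rangle - \sum_{T \subset \omega_E} \int_T \vec{p} \cdot \Psi_E P\sigma \diff{V},
\end{align*}
applying Cauchy--Schwarz to the volume remainder, and dividing by $\|(\Psi_E P\sigma, 0)\|_Y$ yields control of this edge contribution by a multiple of $h_E^{1/2} \|\hat{\vec{p}}\|_{0,E}$ plus the volume quantities already bounded, and an analogous estimate handles $(\vec{0}, \Psi_E P\tau)$.

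The main obstacle is the degree bookkeeping: the trilinear penalty and dielectric couplings force the polynomial degrees $k$ and $l$ upward, so one must verify that the constants from \cite[Proposition 2.1]{Verfurth4} and the associated inverse estimates remain uniform in $h$ and $T$. This follows from affine equivalence together with the fact that, once $s$ and $t$ are fixed, the auxiliary degrees $k$ and $l$ are fixed and the relevant constants depend only on $s$, $t$, and on the shape-regularity ratio $\rho$ of \eqref{quasiuniform}. With the local bounds in place, taking the supremum over unit-norm elements of $\tilde{Y}_h$, invoking the finite overlap of the patches $\omega_E$ guaranteed by admissibility, and applying the discrete Cauchy--Schwarz inequality collapse the resulting sums $h_T^2(\|\vec{p}\|_{0,T}^2 + \|q\|_{0,T}^2) + h_E(\|\hat{\vec{p}}\|_{0,E}^2 + \|\hat{q}\|_{0,E}^2)$ into $\sum_{T \in \triangulation} \Theta_T^2$, yielding the claim.
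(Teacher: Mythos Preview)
Your argument proves the wrong inequality: what you have sketched is essentially the proof of Lemma~\ref{auxilliarySpaceLowerBound}, not Lemma~\ref{auxiliarySpaceUpperBound}. The dual norm $\Vert \tilde{\mathcal{P}}_h(\director_h,\phi_h)\Vert_{\tilde{Y}_h^*}$ is a supremum over \emph{all} unit test functions in $\tilde{Y}_h$, so to bound it from above you must control $\langle\tilde{\mathcal{P}}_h(\director_h,\phi_h),(\vec{v}_h,\psi_h)\rangle$ for an arbitrary $(\vec{v}_h,\psi_h)$. Instead you choose the specific generators $\vec{v}=\vec{p}\vert_T$, $\psi=q\vert_T$, $\sigma=\hat{\vec{p}}\vert_E$, $\tau=\hat{q}\vert_E$, invoke the degree conditions $k\geq\max(3s,s+2(t-1))$, $l\geq 2s+(t-1)$ to ensure these lie in the right polynomial spaces, and apply the bubble lower bound $c_1\Vert\vec{p}\Vert_{0,T}^2\leq\int_T\Psi_T|\vec{p}|^2\,dV$. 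Dividing by $\Vert\Psi_T\vec{p}\Vert_{1,T}\leq c_2 h_T^{-1}\Vert\vec{p}\Vert_{0,T}$ then gives
\[
\frac{\langle\tilde{\mathcal{P}}_h(\director_h,\phi_h),(\Psi_T\vec{p},0)\rangle}{\Vert(\Psi_T\vec{p},0)\Vert_Y}\;\geq\;\frac{c_1}{c_2}\,h_T\Vert\vec{p}\Vert_{0,T},
\]
which is a \emph{lower} bound on the dual norm, and your edge argument likewise reproduces the lower-bound step of Lemma~\ref{auxilliarySpaceLowerBound}. None of this constrains the supremum from above.

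The paper's proof of Lemma~\ref{auxiliarySpaceUpperBound} is much more direct and uses none of the machinery you invoked. For an arbitrary $(\vec{v}_h,\psi_h)\in\tilde{Y}_h$ one applies Cauchy--Schwarz to each integral in the elementwise representation of $\langle\tilde{\mathcal{P}}_h(\director_h,\phi_h),(\vec{v}_h,\psi_h)\rangle$, then uses the structure of $\tilde{Y}_h$ together with quasi-uniformity and standard scaling to obtain bounds of the form $\Vert\vec{v}_h\Vert_{0,T}\leq C h_T\Vert\vec{v}_h\Vert_{1,T}$ and $\Vert\vec{v}_h\Vert_{0,E}\leq C h_E^{1/2}\Vert\vec{v}_h\Vert_{1,\omega_E}$; a discrete Cauchy--Schwarz and finite-overlap count finish. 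In particular, the degree hypotheses on $k$ and $l$ are irrelevant here: they matter only for the lower bound, where one must place $\vec{p}\vert_T$, $q\vert_T$, $\hat{\vec{p}}\vert_E$, $\hat{q}\vert_E$ inside the test space.
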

\begin{proof}
Applying the Cauchy-Schwarz inequality implies that
\begin{align*}
\Vert \tilde{\mathcal{P}}_h(\director_h, \phi_h) \Vert_{\tilde{Y}_h^*} &= \sup_{\substack{[\vec{v_h}, \psi_h] \in \tilde{Y}_h \\ \Vert [\vec{v}_h, \psi_h] \Vert_Y = 1}} \sum_{T \in \triangulation} \int_T \vec{p} \cdot \vec{v}_h \diff{V} + \int_T q \cdot \psi_h \diff{V} \nonumber \\
& \hspace{1.5in} + \sum_{E \in \mathcal{E}_{h, \Omega}} \int_E \hat{\vec{p}} \cdot \vec{v}_h \diff{S} + \int_E \hat{q} \cdot \psi_h \diff{S} \\
& \leq \sup_{\substack{[\vec{v_h}, \psi_h] \in \tilde{Y}_h \\ \Vert [\vec{v}_h, \psi_h] \Vert_Y = 1}} \sum_{T \in \triangulation} \Vert \vec{p} \Vert_{0, T} \Vert \vec{v}_h \Vert_{0, T} + \Vert q \Vert_{0, T} \Vert \psi_h \Vert_{0, T} \\
& \hspace{1.5in} + \sum_{E \in \mathcal{E}_{h, \Omega}} \Vert \hat{\vec{p}} \Vert_{0, E} \Vert \vec{v}_h \Vert_{0, E} + \Vert \hat{q} \Vert_{0, E} \Vert \psi_h \Vert_{0, E}.
\end{align*}
Using the definition of $\tilde{Y}_h$, shape-regularity of the mesh, and standard finite-element scaling arguments,
\begin{align}
\Vert \tilde{\mathcal{P}}_h(\director_h, \phi_h) \Vert_{\tilde{Y}_h^*} & \leq \sup_{\substack{[\vec{v_h}, \psi_h] \in \tilde{Y}_h \\ \Vert [\vec{v}_h, \psi_h] \Vert_Y = 1}} \sum_{T \in \triangulation} C_1 h_T \Vert \vec{p} \Vert_{0, T} \Vert \vec{v}_h \Vert_{1, T} + C_1 h_T \Vert q \Vert_{0, T} \Vert \psi_h \Vert_{1, T} \nonumber \\
& \hspace{1.1in} + \sum_{E \in \mathcal{E}_{h, \Omega}} C_2 h_E^{1/2} \Vert \hat{\vec{p}} \Vert_{0, E} \Vert \vec{v}_h \Vert_{1, \omega_E} + C_2 h_E^{1/2} \Vert \hat{q} \Vert_{0, E} \Vert \psi_h \Vert_{1, \omega_E} \nonumber \\
& \leq \sup_{\substack{[\vec{v_h}, \psi_h] \in \tilde{Y}_h \\ \Vert [\vec{v}_h, \psi_h] \Vert_Y = 1}} \tilde{C} \Bigg( \sum_{T \in \triangulation} h_T^2 \left( \Vert \vec{p} \Vert_{0, T}^2 + \Vert q \Vert_{0, T}^2 \right) + \sum_{E \in \mathcal{E}_{h, \Omega}} h_E \left( \Vert \hat{\vec{p}} \Vert_{0, E}^2 + \Vert \hat{q} \Vert_{0, E}^2 \right) \Bigg)^{1/2} \label{CSSumInequality} \\
& \hspace{1.1in} \cdot \Bigg(  \sum_{T \in \triangulation} \Vert \vec{v}_h \Vert_{1, T}^2 + \Vert \psi_h \Vert_{1, T}^2 + \sum_{E \in \mathcal{E}_{h, \Omega}} \Vert \vec{v}_h \Vert_{1, \omega_E}^2 + \Vert \psi_h \Vert_{1, \omega_E}^2 \Bigg)^{1/2},\nonumber
\end{align}
where $\tilde{C} = \max(C_1, C_2)$ and \eqref{CSSumInequality} is given by the Cauchy-Schwarz inequality for sums. Note, as above, there exists a $C_* > 0$, independent of $h$ and taking into account repeated elements in each sum, such that
\begin{align*}
\Bigg(  \sum_{T \in \triangulation} \Vert \vec{v}_h \Vert_{1, T}^2 + \Vert \psi_h \Vert_{1, T}^2 + \sum_{E \in \mathcal{E}_{h, \Omega}} \Vert \vec{v}_h \Vert_{1, \omega_E}^2 + \Vert \psi_h \Vert_{1, \omega_E}^2 \Bigg)^{1/2} \leq C_* \Vert [\vec{v}_h, \psi_h] \Vert_Y.
\end{align*}
Applying the inequality above to \eqref{CSSumInequality} and using the fact that the supremum is taken over $\Vert [\vec{v}_h, \psi_h] \Vert_Y = 1$,
\begin{align*}
\Vert \tilde{\mathcal{P}}_h(\director_h, \phi_h) \Vert_{\tilde{Y}_h^*} &\leq C_* \tilde{C} \Bigg( \sum_{T \in \triangulation} h_T^2 \left( \Vert \vec{p} \Vert_{0, T}^2 + \Vert q \Vert_{0, T}^2 \right ) + \sum_{E \in \mathcal{E}_{h, \Omega}} h_E \left( \Vert \hat{\vec{p}} \Vert_{0, E}^2 + \Vert \hat{q} \Vert_{0, E}^2 \right) \Bigg)^{1/2} \\
& \leq C \left ( \sum_{T \in \triangulation} \Theta_T^2 \right)^{1/2}.
\end{align*}
As in the previous proof, the last inequality makes use of the fact that the jump components are summed over $E \in \mathcal{E}_{h, \Omega}$.
\end{proof}

The final inequality required to demonstrate reliability of the error estimator is
\begin{align*}
\Vert (\text{Id}_Y - R_h)^*\tilde{\mathcal{P}}_h(\director_h, \phi_h) \Vert_{Y^*} \leq C \Vert \tilde{\mathcal{P}}_h(\director_h, \phi_h) \Vert_{\tilde{Y}_h^*},
\end{align*}
for $C > 0$ and independent of $h$. With the result of Lemma \ref{restrictionUpperBound}, it is sufficient to prove the next lemma.
\begin{lemma} \label{auxilliarySpaceLowerBound}
There exists a $C > 0$, independent of $h$, such that 
\begin{align*}
C \left ( \sum_{T \in \triangulation} \Theta_T^2 \right)^{1/2} \leq \Vert \tilde{\mathcal{P}}_h(\director_h, \phi_h) \Vert_{\tilde{Y}_h^*}.
\end{align*}
\end{lemma}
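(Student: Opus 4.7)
The plan is to apply the standard Verf\"urth bubble function technique, testing $\tilde{\mathcal{P}}_h(\director_h, \phi_h)$ against four carefully chosen families of elements of $\tilde{Y}_h$ that isolate each of the residual contributions $\vec{p}$, $q$, $\hat{\vec{p}}$, $\hat{q}$. The polynomial degree conditions $k \geq \max(3s, s+2(t-1))$ and $l \geq 2s + (t-1)$ are chosen precisely so that the strong-form residuals $\vec{p}|_T$ and $q|_T$ and the edge jumps $\hat{\vec{p}}|_E$ and $\hat{q}|_E$ are polynomials of the degrees admitted by $\tilde{Y}_h$; the degree-driving terms are the cubic contributions such as $(\director_h \cdot \director_h - 1)\director_h$ from the penalty and $(\director_h \cdot \nabla \phi_h)\director_h$ from dielectric anisotropy.

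For each $T \in \triangulation$, I would first take $(\vec{v}_h, \psi_h) = (\Psi_T \vec{p}, 0) \in \tilde{Y}_h$. Since $\Psi_T$ vanishes on $\partial T$, the elementwise integration-by-parts representation collapses to $\langle \tilde{\mathcal{P}}_h(\director_h, \phi_h), (\Psi_T \vec{p}, 0) \rangle = \int_T \Psi_T |\vec{p}|^2 \diff{V}$. The standard bubble-function equivalences on finite-dimensional polynomial spaces give $c \Vert \vec{p} \Vert_{0,T}^2 \leq \int_T \Psi_T |\vec{p}|^2 \diff{V}$ and $\Vert \Psi_T \vec{p} \Vert_{1,T} \leq C h_T^{-1} \Vert \vec{p} \Vert_{0,T}$; combining these yields $h_T \Vert \vec{p} \Vert_{0,T} \leq C \Vert \tilde{\mathcal{P}}_h(\director_h, \phi_h) \Vert_{\tilde{Y}_h^*}$. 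The identical argument applied to $(\vec{0}, \Psi_T q)$ bounds $h_T \Vert q \Vert_{0,T}$.

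Next, for each $E \in \mathcal{E}_{h,\Omega}$, I would choose $(\vec{v}_h, 0) = (\Psi_E P \hat{\vec{p}}, 0)$, which is supported in $\omega_E$ and nonzero on $E$. Elementwise integration by parts now gives $\langle \tilde{\mathcal{P}}_h(\director_h, \phi_h), (\vec{v}_h, 0) \rangle = \int_{\omega_E} \vec{p} \cdot \vec{v}_h \diff{V} + \int_E \hat{\vec{p}} \cdot \Psi_E P \hat{\vec{p}} \diff{S}$. The edge-bubble estimates from \cite{Verfurth4} supply $c \Vert \hat{\vec{p}} \Vert_{0,E}^2 \leq \int_E \Psi_E |\hat{\vec{p}}|^2 \diff{S}$, $\Vert \vec{v}_h \Vert_{0, \omega_E} \leq C h_E^{1/2} \Vert \hat{\vec{p}} \Vert_{0, E}$, and $\Vert \vec{v}_h \Vert_{1, \omega_E} \leq C h_E^{-1/2} \Vert \hat{\vec{p}} \Vert_{0, E}$. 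Rearranging and applying Cauchy--Schwarz to the spurious volume term gives $h_E^{1/2} \Vert \hat{\vec{p}} \Vert_{0,E} \leq C \bigl( \Vert \tilde{\mathcal{P}}_h(\director_h, \phi_h) \Vert_{\tilde{Y}_h^*} + h_T \Vert \vec{p} \Vert_{0, \omega_E} \bigr)$, and the volume residual in the second term is already controlled by the element estimate from the previous step. The analogous argument with $(\vec{0}, \Psi_E P \hat{q})$ bounds $h_E^{1/2} \Vert \hat{q} \Vert_{0,E}$.

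Squaring each local inequality, summing over $T \in \triangulation$ and $E \in \mathcal{E}_{h, \Omega}$, and invoking the finite-overlap property of the patches $\omega_E$ that follows from quasi-uniformity then yields $\bigl(\sum_T \Theta_T^2\bigr)^{1/2} \leq C \Vert \tilde{\mathcal{P}}_h(\director_h, \phi_h) \Vert_{\tilde{Y}_h^*}$. The principal obstacle I anticipate is the edge estimate: the edge-bubble test function necessarily introduces a volume-residual cross term on each of the two elements sharing $E$, and these must be absorbed back against the earlier element bounds before the sum can be closed. A secondary technicality is simply verifying that $\vec{p}$, $q$, and their jumps really do lie in the polynomial spaces underlying $\tilde{Y}_h$, which is the sole reason the degree bounds on $k$ and $l$ cannot be relaxed.
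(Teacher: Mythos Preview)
Your approach is the same Verf\"urth bubble-function argument the paper uses: isolate the element residuals with $\Psi_T$, the edge residuals with $\Psi_E P$, absorb the volume cross term from the edge test into the already-established element bound, and sum. The paper pairs $[\vec{p},q]$ and $[\hat{\vec{p}},\hat{q}]$ and phrases the local bound as a supremum over $\Pi_{k,l,T}$, arriving at $\bar C\,\Theta_T \le \Vert\tilde{\mathcal{P}}_h(\director_h,\phi_h)\Vert_{\tilde Y_{h|\omega_T}^*}$ with the \emph{local} restricted dual norm on the right; you plug in the explicit test functions $(\Psi_T\vec{p},0)$, $(\vec{0},\Psi_T q)$, etc., which is the constructive realization of that same supremum. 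Both routes rely on the polynomial-degree hypotheses on $k,l$ to ensure the residuals belong to the test space, exactly as you note.

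There is one genuine gap, in your summation step. Your local inequalities are of the form $h_T\Vert\vec{p}\Vert_{0,T}\le C\,\Vert\tilde{\mathcal{P}}_h(\director_h,\phi_h)\Vert_{\tilde Y_h^*}$, with the \emph{global} dual norm already on the right. Squaring and summing over $T\in\triangulation$ then introduces a factor equal to the number of elements, which is not bounded independently of $h$; the finite-overlap property of the $\omega_E$ does not help here, because every local inequality shares the identical right-hand side. The remedy is to postpone the passage to the dual norm: retain the pairings $\langle\tilde{\mathcal{P}}_h(\director_h,\phi_h),(\Psi_T\vec{p},0)\rangle$ (and their edge analogues), sum them, and exploit the disjoint or boundedly overlapping supports of the bubble test functions to bound $\Vert\sum_T h_T^2\Psi_T\vec{p}\Vert_Y$ before invoking $\Vert\tilde{\mathcal{P}}_h(\director_h,\phi_h)\Vert_{\tilde Y_h^*}$ once. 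Equivalently, and this is what the paper does, keep the locally restricted dual norm $\Vert\tilde{\mathcal{P}}_h(\director_h,\phi_h)\Vert_{\tilde Y_{h|\omega_T}^*}$ on the right of the local bound and combine those via the bounded overlap of the patches $\omega_T$; the finite-overlap argument then enters at the correct place.
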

\begin{proof}
Fix an arbitrary $T \in \triangulation$ and an edge $E \in \mathcal{E}(T) \cap \mathcal{E}_{h, \Omega}$. Further, define a restricted space $\tilde{Y}_{h \vert \omega}$, for $\omega \in \{T, \omega_E, \omega_T \}$, as the set of functions $\vec{f} \in \tilde{Y}_h$ with support such that $\text{supp}(\vec{f}) \subset \omega$. Finally, denote the product spaces $ \left( [\Pi_{k \vert T}]^3 \times \Pi_{l \vert T} \right ) \backslash \{(\vec{0}, 0) \}$ and $\left ( [\Pi_{k \vert E}]^3 \times \Pi_{l \vert E} \right ) \backslash \{(\vec{0}, 0) \}$ as $\Pi_{k, l, T}$, $\Pi_{k, l, E}$, respectively. Note that the constants in this proof correspond to those of Lemma \ref{cutoffinequalities} or Corollary \ref{cutoffexpansion}. First, consider
\begin{align}
C_1 \bar{C}_4 ^{-1} h_T \Vert [\vec{p}, q] \Vert_{0, T} & \leq \sup_{[\vec{w}, u] \in \Pi_{k, l, T}} \bar{C}_4^{-1} h_T \Vert [\Psi_T \vec{w}, \Psi_T u] \Vert_{0, T}^{-1} \int_T (\vec{p}, q) \cdot (\Psi_T \vec{w}, \Psi_T u) \diff{V} \label{C1SupInequality} \\
& \leq \sup_{[\vec{w}, u] \in \Pi_{k, l, T}} \Vert [\Psi_T \vec{w}, \Psi_T u] \Vert_{1, T}^{-1} \int_T (\vec{p}, q) \cdot (\Psi_T \vec{w}, \Psi_T u) \diff{V}. \label{C4PointcareInequality}
\end{align}
The inequality in \eqref{C1SupInequality} is given by applying \eqref{cutoff1} of Lemma \ref{cutoffinequalities}, while the subsequent inequality in \eqref{C4PointcareInequality} relies on \eqref{expansion1} of Corollary \ref{cutoffexpansion}. Noting that both $\Psi_T \vec{w}$ and $\Psi_T u$ vanish at the boundary of $T$,
\begin{align}
C_1 \bar{C}_4 ^{-1} h_T \Vert [\vec{p}, q] \Vert_{0, T} &\leq \sup_{[\vec{w}, u] \in \Pi_{k, l, T}} \Vert [\Psi_T \vec{w}, \Psi_T u] \Vert_{1, T}^{-1} \langle \tilde{\mathcal{P}}_h(\director_h, \phi_h), (\Psi_T \vec{w}, \Psi_T u) \rangle \nonumber \\
& \leq \sup_{\substack{[\vec{v}_h, \psi_h] \in \tilde{Y}_{h \vert T} \\ \Vert [\vec{v}_h, \psi_h] \Vert_Y = 1}} \langle \tilde{\mathcal{P}}_h(\director_h, \phi_h), (\vec{v}_h, \psi_h) \rangle. \label{firstLowerBoundInequality}
\end{align}
Next, by applying \eqref{cutoff2} from Lemma \ref{cutoffinequalities} and observing that the integrals and norms are taken over $E$ where $P$ does not modify the values of either $\sigma$ or $\beta$,
\begin{align*}
C_2 \bar{C}_6^{-1} C_7^{-1} h_{E}^{1/2} \Vert [\hat{\vec{p}}, \hat{q}] \Vert_{0, E} \leq \sup_{[\sigma, \beta] \in \Pi_{k, l, E}} \frac{\bar{C}_6^{-1} h_E}{C_7 h_E^{1/2} \Vert [P \sigma, P \beta] \Vert_{0, E}} \int_E (\hat{\vec{p}}, \hat{q}) \cdot (\Psi_E P \sigma, \Psi_E P \beta)  \diff{S}.
\end{align*}
Now note that $\Psi_E P \sigma$ is supported on $\omega_E$ and that the norm in the denominator is taken over $E$. This implies that
\begin{align}
C_2 \bar{C}_6^{-1} C_7^{-1} h_{E}^{1/2} \Vert [\hat{\vec{p}}, \hat{q}] \Vert_{0, E} &\leq \sup_{[\sigma, \beta] \in \Pi_{k, l, E}} \frac{\bar{C}_6^{-1} h_E}{C_7 h_E^{1/2} \Vert [\sigma, \beta] \Vert_{0, E}} \bigg ( \langle \tilde{\mathcal{P}}_h(\director_h, \phi_h), (\Psi_E P \sigma, \Psi_E P \beta) \rangle \nonumber \\
&\hspace{2.2in} - \int_{\omega_E} (\vec{p}, q) \cdot (\Psi_E P \sigma, \Psi_E P \beta) \diff{V} \bigg)\nonumber \\
& \leq \sup_{[\sigma, \beta] \in \Pi_{k, l, E}} \frac{\bar{C}_6^{-1} h_E}{\Vert [\Psi_E P \sigma, \Psi_E P \beta] \Vert_{0, \omega_E}} \bigg ( \langle \tilde{\mathcal{P}}_h(\director_h, \phi_h), (\Psi_E P \sigma, \Psi_E P \beta) \rangle \label{C7Inequality} \\
&\hspace{2.2in} - \int_{\omega_E} (\vec{p}, q) \cdot (\Psi_E P \sigma, \Psi_E P \beta) \diff{V} \bigg) \nonumber.
\end{align}
The admissibility and shape-regularity properties of the mesh imply that the number of elements in $\omega_E$ is bounded by a constant independent of $h$ and $E$. Thus, \eqref{C7Inequality} is given by \eqref{cutoff5} of Lemma \ref{cutoffinequalities} where $C_7$ has been adjusted to account for the bounded number of elements in $\omega_E$. Distributing the fraction and applying \eqref{expansion2} of Corollary \ref{cutoffexpansion} to the first component and the Cauchy-Schwarz inequality to the second yields
\begin{align}
C_2 \bar{C}_6^{-1} C_7^{-1} h_{E}^{1/2} \Vert [\hat{\vec{p}}, \hat{q}] \Vert_{0, E} &\leq \sup_{[\sigma, \beta] \in \Pi_{k, l, E}} \Vert [\Psi_E P \sigma, \Psi_E P \beta] \Vert_{1, \omega_E}^{-1} \langle \tilde{\mathcal{P}}_h(\director_h, \phi_h), (\Psi_E P \sigma, \Psi_E P \beta) \rangle \nonumber \\
& \hspace{1.5in} + \bar{C}_6^{-1} h_E \sum_{T \in \omega_E} \Vert [\hat{\vec{p}}, \hat{q}] \Vert_{0, T} \nonumber \\
& \leq \sup_{\substack{[\vec{v}_h, \psi_h] \in \tilde{Y}_{h \vert \omega_E} \\ \Vert [\vec{v}_h, \psi_h] \Vert_{Y} =1}} \Vert [\vec{v}_h, \psi_h] \Vert_{1, \omega_E}^{-1} \langle \tilde{\mathcal{P}}_h(\director_h, \phi_h), (\vec{v}_h, \psi_h) \rangle \label{RelateBackToPreviousLowerBound} \\
& \hspace{1.5in} + C_d \sup_{\substack{[\vec{v}_h, \psi_h] \in \tilde{Y}_{h \vert \omega_E} \\ \Vert [\vec{v}_h, \psi_h] \Vert_{Y} =1}} \langle \tilde{\mathcal{P}}_h(\director_h, \phi_h), (\vec{v}_h, \psi_h) \rangle, \nonumber
\end{align}
where the final inequality in \eqref{RelateBackToPreviousLowerBound} is given by expanding the space over which the supremum is taken in the first summand and using the inequality in \eqref{firstLowerBoundInequality}, with $C_d$ relating the constants $\bar{C}_6^{-1} h_E$ and $C_1 \bar{C}_4^{-1} h_T$ and taking care of the summation over $\omega_E$. Specifically, let $C_d = \Gamma \frac{\bar{C}_6 h_E}{C_1 \bar{C}_4^{-1} h_T}$, where $\Gamma$ is the maximum number of elements in $\omega_E$ for any $E$. The admissibility and shape-regularity properties of the mesh ensure that $\Gamma$ is independent of $h$,  $E$, and $T$ and that the ratio $h_E/h_T$ is bounded above and below by independent constants as well. Finally, note that the supremums only increase when taken over $\omega_T$. 

Gathering the bounds in \eqref{firstLowerBoundInequality} and \eqref{RelateBackToPreviousLowerBound} and applying the inequality
\begin{align}
\left ( \sum_i a_i \right )^{1/2} \leq \sum_i a_i^{1/2}, \label{positiveAiLowerBound}
\end{align}
for $a_i > 0$, implies that
\begin{align}
\bar{C} \Theta_T \leq \sup_{\substack{[\vec{v}_h, \psi_h] \in \tilde{Y}_{h \vert \omega_T} \\ \Vert [\vec{v}_h, \psi_h] \Vert_{Y} =1}} \langle \tilde{\mathcal{P}}_h(\director_h, \phi_h), (\vec{v}_h, \psi_h) \rangle. \label{localLowerBound}
\end{align}
Finally, summing over $T \in \triangulation$ and applying \eqref{positiveAiLowerBound} again yields $C \left ( \sum_{T \in \triangulation} \Theta_T^2 \right)^{1/2} \leq \Vert \tilde{\mathcal{P}}_h(\director_h, \phi_h) \Vert_{\tilde{Y}_h^*}.$
\end{proof}

These results enable the statement and proof of the main result of this section establishing reliability and local efficiency of the proposed a posteriori error estimator. 
\begin{theorem}
Say that $(\director_*, \phi_*)$ is a solution to Equation \eqref{PenaltyFOOC} satisfying the assumptions of Proposition \ref{nonlinearErrorEstimation}. Let $(\director_h, \phi_h)$ be a discrete solution to Equation \eqref{discretePenaltyFOOC} such that $\Vert \mathcal{P}_h(\director_h, \phi_h) \Vert_{Y_h^*}  = 0$ and $(\director_h, \phi_h) \in B((\director_*, \phi_*), R)$. Then, there exist $C_r, C_e > 0$, independent of $h$, such that
\begin{align}
&\Vert (\director_*, \phi_*) - (\director_h, \phi_h) \Vert_1 \leq C_r \left ( \sum_{T \in \triangulation} \Theta_T^2 \right)^{1/2}, \label{reliability} \\
&\Theta_T \leq C_e \Vert (\director_*, \phi_*) - (\director_h, \phi_h) \Vert_{1, \omega_T} \label{efficiency}.
\end{align}
\label{final_theorem}
\end{theorem}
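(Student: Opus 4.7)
The plan is to combine the three lemmas with the abstract framework of Verfürth, specifically Propositions 2.1 and 2.5 from \cite{Verfurth4}, which are explicitly invoked in the setup of Section \ref{assumptions_notations}. The overall strategy is: use Proposition 2.1 to convert the error $\Vert (\director_*,\phi_*) - (\director_h,\phi_h)\Vert_1$ into a residual norm $\Vert \tilde{\mathcal{P}}_h(\director_h,\phi_h)\Vert_{Y^*}$ in the dual space; then use Proposition 2.5 to bound this dual norm in terms of the restriction error and the auxiliary-space residual; then apply Lemma~\ref{restrictionUpperBound} and Lemma~\ref{auxiliarySpaceUpperBound} to terminate in $\left(\sum_T \Theta_T^2\right)^{1/2}$.

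First, I would verify that the hypotheses of \cite[Proposition 2.1]{Verfurth4} are in force. These are provided in the theorem statement: $(\director_*,\phi_*)$ is a regular solution of $\mathcal{P}=0$ (so that $D\mathcal{P}(\director_*,\phi_*) \in \text{Isom}(X_0, Y^*)$), $\mathcal{P} \in C^1(X,Y^*)$ with locally Lipschitz derivative (which follows from the polynomial nonlinearities in \eqref{PenaltyFOOC}), and the discrete iterate $(\director_h,\phi_h)$ lies in the ball $B((\director_*,\phi_*),R)$ on which the inverse bound is valid. Proposition~2.1 then yields a constant $C_1 > 0$ with
\begin{align*}
\Vert (\director_*,\phi_*) - (\director_h,\phi_h) \Vert_1 \leq C_1 \Vert \tilde{\mathcal{P}}_h(\director_h,\phi_h) \Vert_{Y^*},
\end{align*}
using the identification \eqref{tildePdefinition}.

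Next, I would apply \cite[Proposition 2.5]{Verfurth4} to the residual on the right. Writing any $(\vec{v},\psi) \in Y$ as $R_h(\vec{v},\psi) + (\text{Id}_Y - R_h)(\vec{v},\psi)$ and exploiting $\langle \mathcal{P}_h(\director_h,\phi_h), (\vec{v}_h,\psi_h)\rangle = 0$ from \eqref{discretePenaltyFOOC}, together with the hypothesis $\Vert \mathcal{P}_h(\director_h,\phi_h)\Vert_{Y_h^*} = 0$, one obtains a bound of the form
\begin{align*}
\Vert \tilde{\mathcal{P}}_h(\director_h,\phi_h)\Vert_{Y^*} \leq C_2 \Bigl( \Vert (\text{Id}_Y - R_h)^*\tilde{\mathcal{P}}_h(\director_h,\phi_h)\Vert_{Y^*} + \Vert \tilde{\mathcal{P}}_h(\director_h,\phi_h)\Vert_{\tilde{Y}_h^*} \Bigr).
\end{align*}
Inserting the bounds from Lemmas~\ref{restrictionUpperBound} and \ref{auxiliarySpaceUpperBound}, and absorbing all constants into a single $C_r$ independent of $h$, delivers the claimed reliability inequality.

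The main obstacle is the localization step that produces the Proposition~2.5 bound. Because the discrete space $Y_h$ is constructed from continuous piecewise polynomials with Dirichlet trace, the consistency $\Vert \mathcal{P}_h(\director_h,\phi_h)\Vert_{Y_h^*}=0$ must be combined carefully with the fact that $R_h$ maps into $Y_h$ rather than $X_h$; this is exactly the role played by the auxiliary space $\tilde{Y}_h$, and the matching lower bound in Lemma~\ref{auxilliarySpaceLowerBound} is what makes the upper estimate sharp enough for the equivalence used inside Proposition~2.5. Once that machinery is cited, the remainder of the argument is a direct chaining of inequalities with constants absorbed into $C_r$.
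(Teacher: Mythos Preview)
Your proposal is correct and follows essentially the same route as the paper: verify the hypotheses of \cite[Proposition~2.5]{Verfurth4} via the lemmas, deduce a bound on $\Vert \mathcal{P}(\director_h,\phi_h)\Vert_{Y^*}$, and then invoke \cite[Proposition~2.1]{Verfurth4}. The only cosmetic difference is the order and the exact form in which Proposition~2.5 is applied: the paper first chains Lemma~\ref{restrictionUpperBound} and Lemma~\ref{auxilliarySpaceLowerBound} to obtain $\Vert (\text{Id}_Y - R_h)^*\tilde{\mathcal{P}}_h\Vert_{Y^*}\le C\Vert \tilde{\mathcal{P}}_h\Vert_{\tilde{Y}_h^*}$ (the actual hypothesis of Proposition~2.5), from which the conclusion $\Vert \mathcal{P}(\director_h,\phi_h)\Vert_{Y^*}\le C\Vert \tilde{\mathcal{P}}_h\Vert_{\tilde{Y}_h^*}$ follows directly, and then terminates with Lemma~\ref{auxiliarySpaceUpperBound}; your ``sum'' form of the conclusion is not the literal statement of Proposition~2.5, but since you correctly flag Lemma~\ref{auxilliarySpaceLowerBound} as supplying the required hypothesis, the argument closes the same way.
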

\begin{proof}
Combining Lemmas \ref{restrictionUpperBound} and \ref{auxilliarySpaceLowerBound} implies the bound
\begin{align*}
\Vert (\text{Id}_Y - R_h)^* \tilde{\mathcal{P}}_h(\director_h, \phi_h) \Vert_{Y^*} &\leq C_0 \left ( \sum_{T \in \triangulation} \Theta_T^2 \right)^{1/2}  \leq C_1 \Vert \tilde{\mathcal{P}}_h (\director_h, \phi_h) \Vert_{\tilde{Y}_h^*},
\end{align*}
for $C_0, C_1 > 0$. Thus, the conditions of Proposition \ref{auxiliarySpaceInequality} are fulfilled. With the results in Equations \eqref{zeroComponent1} and \eqref{zeroComponent2} and Lemma \ref{auxiliarySpaceUpperBound},
\begin{align*}
\Vert \mathcal{P}(\director_h, \phi_h) \Vert_{Y^*} \leq C_2 \Vert \tilde{\mathcal{P}}_h (\director_h, \phi_h) \Vert_{\tilde{Y}_h^*} \leq C_3 \left ( \sum_{T \in \triangulation} \Theta_T^2 \right)^{1/2}.
\end{align*}
Note that the term $\Vert R_h \Vert_{\mathcal{L}(Y, Y_h)} \Vert F_h(u_h) \Vert_{Y_h^*}$ from Proposition \ref{auxiliarySpaceInequality} is zero when $(\director_h, \phi_h)$ satisfy Equation \eqref{discretePenaltyFOOC}. The upper bound from Proposition \ref{nonlinearErrorEstimation} then implies that
\begin{align*}
\Vert (\director_*, \phi_*) - (\director_h, \phi_h) \Vert_1 &\leq 2 \Vert D \mathcal{P}(\director_*, \phi_*)^{-1} \Vert_{\mathcal{L}(Y^*, X_0)} \Vert \mathcal{P}(\director_h, \phi_h) \Vert_{Y^*} \\
& \leq 2 C_3 \Vert D \mathcal{P}(\director_*, \phi_*)^{-1} \Vert_{\mathcal{L}(Y^*, X_0)} \left ( \sum_{T \in \triangulation} \Theta_T^2 \right)^{1/2}.
\end{align*}
Setting $C_r = 2C_3 \Vert D \mathcal{P}(\director_*, \phi_*)^{-1} \Vert_{\mathcal{L}(Y^*, X_0)}$ proves the inequality in \eqref{reliability}.

As noted in \cite[Remark 2.2]{Verfurth1}, the lower bound of Proposition \ref{nonlinearErrorEstimation} remains valid when restricted to appropriate norms over the open subset $\omega_T \subset \Omega$. Together with inequality \eqref{localLowerBound}, this implies that
\begin{align*}
\Theta_T \leq C_4 \sup_{\substack{[\vec{v}_h, \psi_h] \in \tilde{Y}_{h \vert \omega_T} \\ \Vert [\vec{v}_h, \psi_h] \Vert_{Y} =1}} \langle \tilde{\mathcal{P}}_h(\director_h, \phi_h), (\vec{v}_h, \psi_h) \rangle &\leq C_4 \Vert \mathcal{P}(\director_h, \phi_h) \Vert_{Y^*_{\omega_T}} \\
& \leq 2 C_4 C_5 \Vert (\director_*, \phi_*) - (\director_h, \phi_h) \Vert_{1, \omega_T},
\end{align*} 
where $C_5$ is given by the value of the restriction of the norm $\Vert D \mathcal{P}(\director_*, \phi_*) \Vert_{\mathcal{L}(X_0, Y^*)}$ from Proposition \ref{nonlinearErrorEstimation} to $X_{\omega_T} \subset X_0$ and $Y^*_{\omega_T} \subset Y^*$, the subspaces of $X_0$ and $Y^*$ limited to functions supported on $\omega_T$. Taking $C_e = 2 C_4 C_5$ proves \eqref{efficiency}.
\end{proof}

\begin{remark}
The results of Lemma \ref{cutoffinequalities} are equally applicable to meshes composed of quadrilateral or simplicial elements, as noted in \cite[Remark 3.5]{Verfurth2}. Thus, the results of this section extend to either type of mesh, satisfying equivalent conditions.
\end{remark}

\section{Numerical Results} \label{numerics}

In this section, numerical experiments applying the electrically and flexoelectrically coupled estimator proposed above are presented. The inclusion of both electric and flexoelectric effects, paired with Dirichlet boundary conditions, limits the availability of non-trivial, closed-form solutions. However, the numerical results suggest that the proposed estimator markedly increases simulation efficiency with equivalent or superior performance across a number of metrics compared with uniform mesh refinement.

The algorithm to compute equilibrium solutions to the nonlinear variational systems discussed in Section \ref{model} employs nested iteration (NI) \cite{Starke1}, which begins on a specified coarsest grid. On each NI level, Newton iterations are performed, updating the solution approximation at each step. The stopping criterion for the iterations on each mesh is based on a tolerance of $10^{-4}$ for the approximation's conformance to the first-order optimality conditions in the standard $l_2$ norm. The resulting approximation is then interpolated to a finer grid, where Newton iterations continue. For each iteration, an incomplete Newton correction is performed such that for a given iterate $\vec{u}_k$, the next Newton iterate is given by $\vec{u}_{k+1} = \vec{u}_k + \alpha \delta \vec{u}_h$, where $\alpha \leq 1$. While more sophisticated techniques exist \cite{Emerson3}, this simple approach effectively encourages strict adherence to the unit-length constraint manifold. The damping parameter, $\alpha$, begins at $0.2$ and increases by $0.2$ at each level of NI, to a maximum of $1.0$, as the finer features of the solution become increasingly resolved. For more details on the algorithm, see \cite{Emerson2}. The systems are discretized with bi-quadratic ($Q_2$) elements on quadrilateral meshes for both $\director$ and $\phi$. Finally, the same non-dimensionalization parameters used in \cite{Emerson2} are applied.

On each level, AMR has three stages to produce the next finer mesh: $\text{Estimate} \rightarrow \text{Mark} \rightarrow \text{Refine}$. For each $T \in \mathcal{T}_H$, the local estimator $\Theta_T$ is computed with respect to the coarse approximate solution $\vec{u}_H$. Elements of $\mathcal{T}_H$ are then flagged for refinement through Dorfler marking \cite{Dorfler1}, where $T \in \mathcal{T}_H$ is marked if it is part of a minimal subset $\hat{\mathcal{T}}_H \subset \mathcal{T}_H$ such that $\sum_{T \in \hat{\mathcal{T}}_H} \Theta_T^2 \geq (1- \nu) \sum_{T \in \mathcal{T}_H} \Theta_T^2$. Any marked cells are refined through bisection to produce the next NI mesh. The grid management, discretizations, and adaptive refinement computations are implemented with the widely used \emph{deal.II} finite-element library \cite{BangerthHartmannKanschat2007}.

The simulations here utilize meshes with quadrilateral elements. Therefore, adaptive refinement leads to the existence of hanging nodes. These nodes are dealt with in a standard way by constraining their values with the neighboring regular nodes to maintain continuity along the boundary. Additionally, a $1$-irregular mesh is maintained such that the number of hanging nodes on an edge is at most one. Finally, the theory developed in preceding sections assumes that the studied meshes satisfy the admissibility property. This assumption is valid for the coarsest mesh but, with the introduction of hanging nodes, no longer holds after the first AMR stage. While mesh discretizations employing simplices can maintain admissibility with adaptivity, grids composed purely of quadrilateral elements cannot. Thus, following the first level of refinement, the error estimator is applied heuristically. At present, the \emph{deal.II} library is limited to quadrilateral meshes, but support for simplices is under development. Experimentation with such meshes is planned as important future work.

In order to compare efficiency across different refinement techniques, an approximate work unit (WU) is calculated for each simulation. Assuming the presence of solvers that scale linearly with the number of non-zeros in the matrix, a WU is defined as the sum of the non-zeros in the discretized Hessian for each Newton step over the NI hierarchy divided by the number of non-zeros in a reference fine-grid Hessian. Thus, a WU roughly approximates the work required by any full NI hierarchy in terms of assembling and solving a single linearization step for the reference Hessian when optimally scaling solvers are applied.

Below, the reference Hessian belongs to the finest level of uniform refinement. Say that NI incorporates a hierarchy of $J$ grids with uniform refinement and $\hat{J}$ grids with AMR. Let $n_{z, i}$ and $\hat{n}_{z, i}$ be the number of non-zeros in the discretized Hessian for grid $i$ with uniform or adaptive refinement, respectively. Denote by $m_i$ and $\hat{m}_i$ the number of Newton steps taken on grid $i$ with uniform or adaptive refinement. WUs are computed as
\begin{align*}
WU_{\text{uniform}} = \frac{1}{n_{z, J}} \sum_{i=1}^J m_i \cdot n_{z, i}, & & WU_{\text{AMR}} = \frac{1}{n_{z, J}} \sum_{i=1}^{\hat{J}} \hat{m}_i \cdot \hat{n}_{z, i}.
\end{align*}
While the linear systems here are solved with simple LU decomposition, the reported WUs provide a best-case scaling for comparing the work required between refinement strategies, particularly when an optimal multigrid solver is applied. As a second, rough quantification of efficiency gains, the total wall time to compute a solution is reported. This is denoted by ``Time'' in the results tables. It includes the time overhead for all elements required to produce a final solution, of which the most substantial is constructing and solving the linear systems for each Newton step.

For each of the numerical experiments, the non-dimensionalized physical parameters for $5$CB, a common liquid crystal, are used such that $K_1 = 1$, $K_2 = 0.62903$, $K_3 = 1.32258$, $\epsilon_{\perp} = 7$, and $\epsilon_a = 11.5$. The non-dimensionalized free space permittivity is $\epsilon_0 = 1.42809$, and the flexoelectric constants are $e_s = 1.5$ and $e_b = -1.5$. The penalty parameter is $\zeta = 10^5$. Each of the simulations begins on a $16 \times 16$ mesh followed by $5$ levels of uniform refinement or $6$ levels of AMR. For both experiments, the Dorfler constant is $\nu = 0.1$. Finally, the domain, $\Omega$, is a unit-square.

As noted above, the systems considered in the experiments to follow lack known closed-form solutions from which to compute approximation errors. As a proxy, overkill solutions are computed. That is, for each experiment, an approximate solution is computed for a mesh with $10$ total levels of uniform refinement and more than $16$ million degrees of freedom (DOFs). The overkill solutions are used to produce estimates of approximation error for solutions produced on coarser meshes with both adaptive and uniform refinement. More specifically, if $(\director_o,\phi_o)$ is an overkill solution, $\Vert (\director_o, \phi_o) - (\director_h, \phi_h) \Vert_1$ is reported for approximate solutions $(\director_h, \phi_h)$ as ``H1 Error'' in the results. For comparison, on each mesh, the global H1-error estimate, $\left( \sum_{T \in \triangulation} \Theta_T^2 \right)^{1/2}$, and maximum local H1-error estimate, $\sup_{T \in \triangulation} \Theta_T$, are also reported. It is important to note that the exact values of $C_r$ and $C_e$ from Theorem \ref{final_theorem} are not known, but these estimator values scale favorably with the overkill error in the experiments to follow.

\subsection{A Flexoelectric Field with Patterned Boundaries} \label{flexo_patterned_boundary}

For this problem, no external electric field is applied to the domain. Instead orientational patterning is enforced on the director at the boundaries $y=0.0$ and $y=1.0$. On the boundary, the first component of the director, $n_1$, is uniformly zero. To specify $n_2$ and $n_3$, let $L = -0.95$, $\alpha(x, y) = 4\pi x + \frac{\pi}{2}$, and $\beta(x, y) = 4\pi x - \frac{3\pi}{2}$. Next, define the functions
\begin{align*}
s_1(x, y) &= \frac{L\sin(\alpha(x, y))}{L\cos(\alpha(x, y)) - 1}, & t_1(x, y) &= \frac{L\sin(\alpha(x, y))}{L\cos(\alpha(x, y)) + 1}, \\
s_2(x, y) &= \frac{L\sin(\beta(x, y))}{L\cos(\beta(x, y)) - 1}, & t_2(x, y) &= \frac{L\sin(\beta(x, y))}{L\cos(\beta(x, y)) + 1}.
\end{align*}
Finally, set $\theta_1 = \frac{\pi}{4} + \frac{1}{2}(\arctan(s_1(x, y)) - \arctan(t_1(x, y)))$ and $\theta_2 = \frac{\pi}{4} + \frac{1}{2}(\arctan(s_2(x, y)) - \arctan(t_2(x, y)))$. Then, the boundary conditions are
\begin{align*}
n_2(x, y) = \begin{cases}
    \cos(\theta_1(x, y)) & \text{if } x < 0.5, \\
    \cos(\theta_2(x, y)) & \text{if } x \geq 0.5, \\
\end{cases}
& & 
n_3(x, y) &= \begin{cases}
    \sin(\theta_1(x, y)) & \text{if } x < 0.5, \\
    \sin(\theta_2(x, y)) & \text{if } x \geq 0.5. \\
\end{cases}
\end{align*}
Note that these boundary conditions, and those of the next section, are not exactly representable on the discrete mesh. Thus, at each level of refinement, they are projected onto the mesh boundaries to more closely approximate the true boundary functions with increasing refinement.

The pattern defined by these boundary conditions is seen in Figure \ref{2DPatternRefinement}(\subref{2DPatternRefinement:left1}). The sharp transitions induce elastic deformation on the interior of the domain, subsequently engendering a sustained electric field due to flexoelectric coupling. This field is shown in Figure \ref{2DPatternRefinement}(\subref{2DPatternRefinement:right1}). The points of pattern transition are also the sites with the most elastic and electric activity. As such, it is reasonable to assume that regions near these transitions also house the most difficult physics to capture. In Figure \ref{2DPatternRefinement}(\subref{2DPatternRefinement:center1}), the adaptively refined mesh clearly exhibits a significant focus on these areas.

\begin{figure}[ht!]
\begin{subfigure}{0.33 \textwidth}
	\includegraphics[width=0.97\textwidth, center]{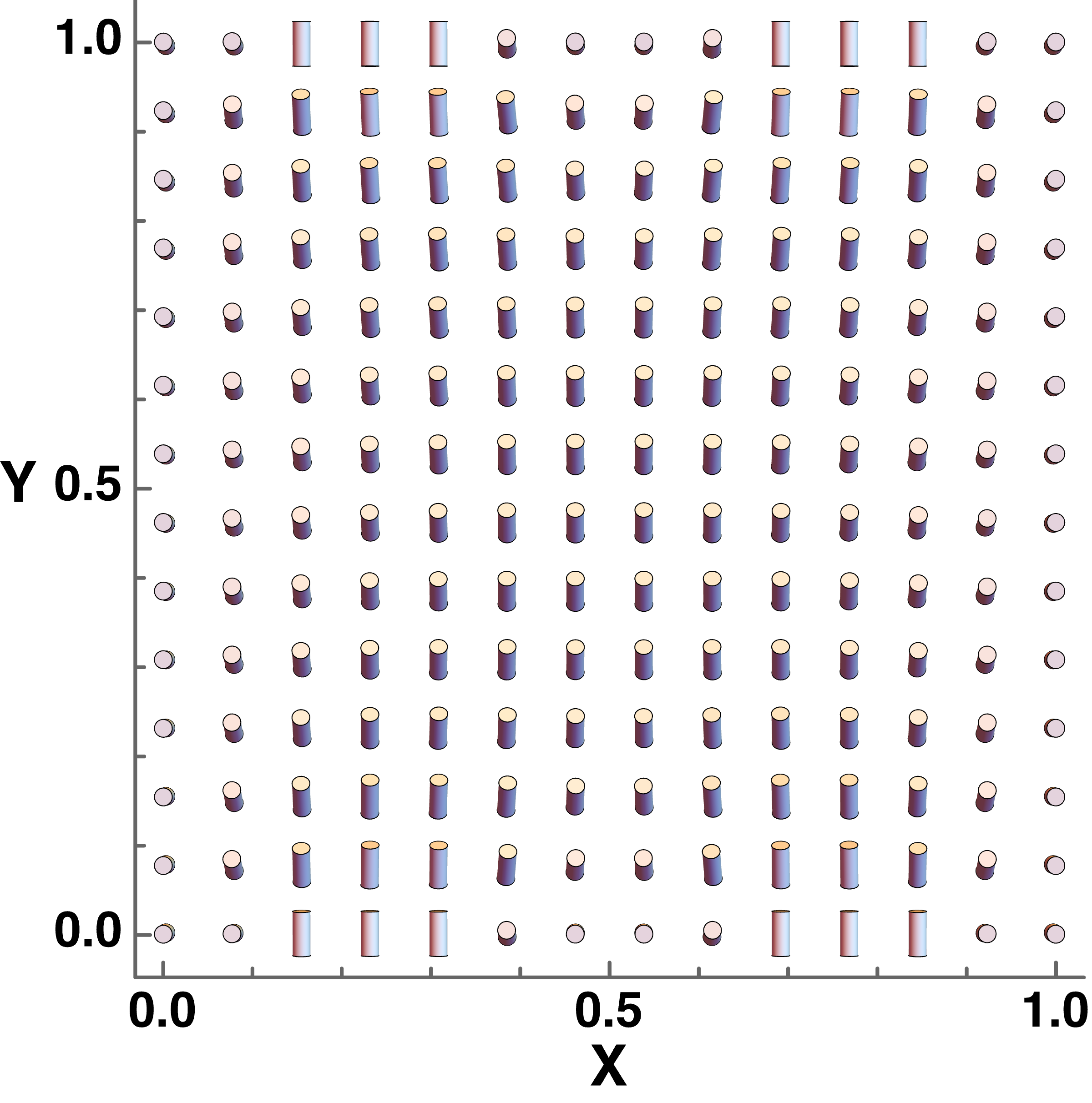} \\ \vspace{-0.25cm}
  \caption{}
  \label{2DPatternRefinement:left1}
\end{subfigure}
\begin{subfigure}{0.33 \textwidth}
	\includegraphics[width=0.99\textwidth, center]{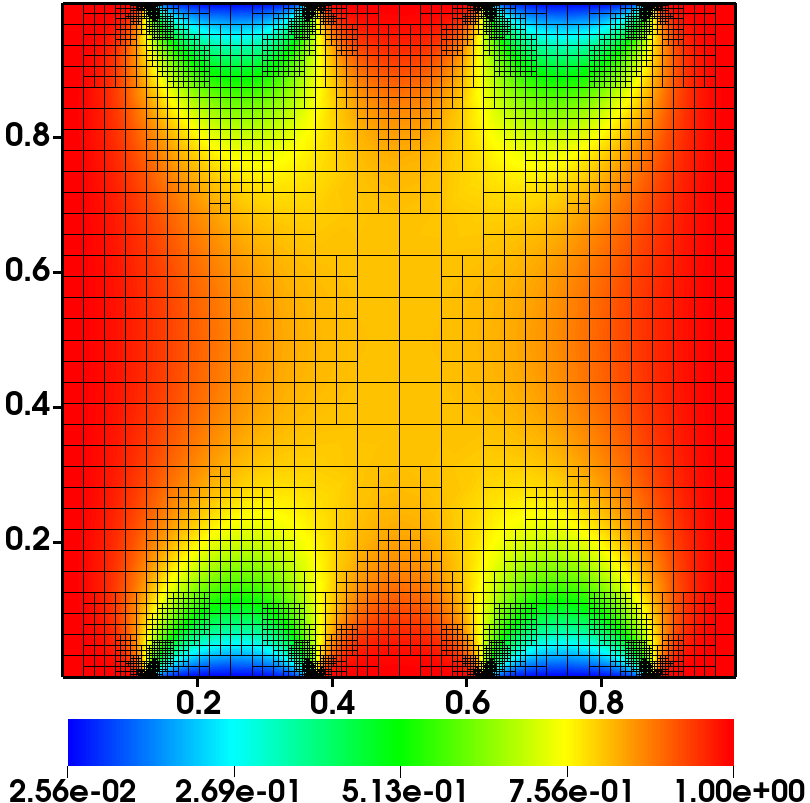}
  \caption{}
  \label{2DPatternRefinement:center1}
\end{subfigure}
\begin{subfigure}{0.33 \textwidth}
	\includegraphics[width=0.99\textwidth, center]{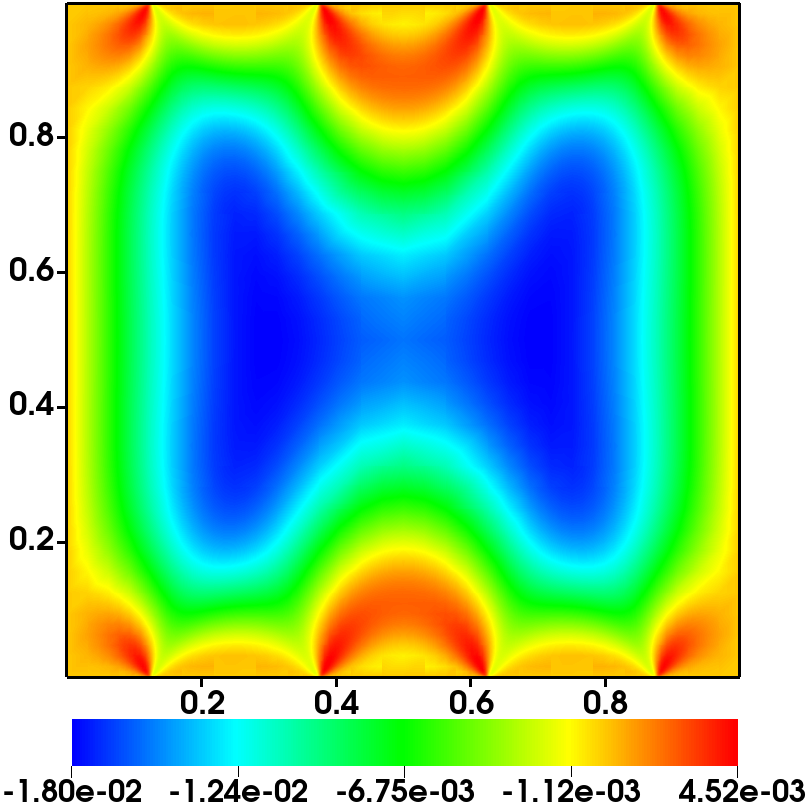}
  \caption{}
  \label{2DPatternRefinement:right1}
\end{subfigure}
\caption{\small{
(\subref{2DPatternRefinement:left1})  Fine-mesh computed solution (restricted for visualization) using Dorfler AMR with $\nu = 0.1$. (\subref{2DPatternRefinement:center1}) Resulting mesh patterns after six levels of adaptive refinement overlaid on the value of $n_3$. (\subref{2DPatternRefinement:right1}) Electric potential produced by the patterned surfaces and flexoelectric coupling.
}}
 \label{2DPatternRefinement}
\end{figure}

Table \ref{PatternElectricAMRStats} presents statistics contrasting the quality of the approximate solution produced with uniform meshes to those computed through AMR. The solution found with AMR demonstrates improved unit-length conformance and equivalent free energy. The column titled ``Gauss'' in Table \ref{PatternElectricAMRStats} reports each solution's local Gauss' law conformance over the domain, measured as $\sum_{T \in \triangulation} \int_T (\diverg \vec{D})^2  \diff{V}$. As no special consideration or care has been taken to strongly enforce Gauss' law conformance outside of adherence to the first-order optimality conditions, the patterned boundary conditions lead to relatively large values. However, conformance for the solution constructed with AMR is markedly better, implying more accurate capture of the relevant physics. These statistics are achieved despite the computation using $40.7$ times fewer DOFs and exhibiting noticeably reduced memory overhead. Finally, AMR computations consume $18$ times fewer WUs and reduce total solve time by $96.9$\%. 

\begin{table}[ht!]
\centering
\caption{\small{Statistics associated with the patterned boundary problem comparing solutions computed with AMR and Dorfler marking to those applying uniform refinement. Numbers correspond to the finest mesh of each approach. The first two columns are the largest director deviations above and below unit-length at the quadrature nodes. The third column is the free energy, as expressed in Equation \eqref{flexofunctional}. ``Gauss'' quantifies a solutions conformance to Gauss' law.
}}
\begin{tabular}{cccccccc}
Strategy & Pos. Dev. & Neg. Dev. & Energy & Gauss & DOFs & WUs & Time\\
\toprule
Uniform & $5.455$e-$03$ & $4.500$e-$03$ & $9.05255$ & $44.416$ & $4,202,500$ & $4.586$ & $1,002$s \\
Adaptive & $5.319$e-$03$ & $4.501$e-$03$ & $9.05263$ & $29.144$ & $103,124$ & $0.246$ & $30.9$s
\end{tabular}
\label{PatternElectricAMRStats}
\end{table} 

Table \ref{PatternElectricNIStats} provides a comparison of the DOFs, Newton steps for convergence, the progression of a solutions free energy, and the H1 error compared to the overkill solution across the NI hierarchy of meshes. The DOFs, and therefore the size of the linear system to solve, grows significantly faster with uniform refinement. However, the number of Newton steps remains constant and the free energy of resolved solutions remains very similar between the uniform and adapted mesh. Moreover, solutions computed with AMR produce similar errors with considerably fewer DOFs. For example, the finest AMR solution exhibits similar error to the finest uniform solution while utilizing $40$ times fewer DOFs. Finally, note that the global and largest element error estimates scale favorably with the error as refinement occurs.

\begin{table}[ht!]
\centering
\caption{\small{A comparison of the DOFs, Newton steps required for convergence, free energy, and H1 error (relative to the overkill solution) across the NI hierarchy for uniform and adaptive mesh refinement, respectively. H1 Est. denotes the total estimator across all elements and $\Theta_T$ represents the maximum estimator value for each mesh.
}}
\resizebox{\textwidth}{!}{
\begin{tabular}{c|llcc|llccll}
\multicolumn{1}{c}{Strategy} & \multicolumn{4}{c}{Uniform} & \multicolumn{6}{c}{Adaptive} \\
\toprule
Mesh \# & DOFs & Steps & Energy & H1 Error & DOFs & Steps & Energy & H1 Error & H1 Est. & $\Theta_T$ \\
\midrule
1 & $4,356$ & $140$ & $7.70541$ & $6.98447$ & $4,356$ & $140$ & $7.70541$ & $6.98447$ & $123.131$ & $30.460$  \\
2 & $16,900$ & $25$ & $8.62811$ & $3.02459$ & $5,384$ & $25$ & $8.57986$ & $3.41378$ & $42.604$ & $28.730$ \\
3 & $66,564$ & $13$ & $9.04516$ & $0.92013$ & $6,472$ & $13$ & $9.02594$ & $1.18045$ & $16.615$ & $7.790$ \\
4 & $264,196$ & $7$ & $9.07285$ & $0.20159$ & $10,388$ & $8$ & $9.07375$ & $0.32266$ & $7.973$ & $3.208$ \\
5 & $1,052,676$ & $3$ & $9.05324$ & $0.05570$ & $18,940$ & $3$ & $9.05480$ & $0.10431$ & $4.033$ & $1.649$ \\
6 & $4,202,500$ & $3$ & $9.05255$ & $0.01396$ & $41,456$ & $3$ & $9.05273$ & $0.03889$ & $2.144$ & $0.648$ \\
7 & -- & -- & -- & -- & $103, 124$ & $2$ & $9.05263$ & $0.01630$ & $1.199$ & $0.192$
\end{tabular}}
\label{PatternElectricNIStats}
\end{table}

\subsection{Electric Field with Sharp Transition}

This experiment considers a system with a large, externally applied, electric field. Uniform boundary conditions are used for the director, fixing $\director = (0, 0, 1)^T$. The electric potential is set to zero along the boundary except for $y = 1.0$ where an approximate square function is used such that $\phi$ rises to roughly $1.5$ on the middle-third of the edge, producing a large electric field with a steep transition near the top boundary. Specifically, reusing $\theta_1(x, y)$ from Section \ref{flexo_patterned_boundary}, the electric potential on the boundary $y=1.0$ is defined as $\phi(x, y) = 1.5\big(\cos(\theta_1(x, y)) - \cos\big(\frac{\pi}{4} -\arctan(L)\big)\big)$.

The effects of the large electric field are seen in Figure \ref{2DSquareRefinement}(\subref{2DSquareRefinement:left1}), which shows the computed solution on the finest mesh with Dorfler AMR. In response to the field, the director deforms to align with the field lines, even near the boundary where elastic resistance is strongest. The electric potential is illustrated in Figure \ref{2DSquareRefinement}(\subref{2DSquareRefinement:right1}). The regions surrounding the rapid transitions in the electric potential contain the most difficult to resolve physics and the largest free-energy contributions. This suggests that a significant portion of the total approximation error will also be present in these areas. In Figure \ref{2DSquareRefinement}(\subref{2DSquareRefinement:center1}), the refinement patterns clearly emphasize the transition regions. As such, finer mesh elements improve resolution therein while areas with dynamics that are likely already well captured by a coarser mesh, such as those near $y=0$, are de-emphasized.

\begin{figure}[ht!]
\begin{subfigure}{0.33 \textwidth}
	\includegraphics[width=0.97\textwidth, center]{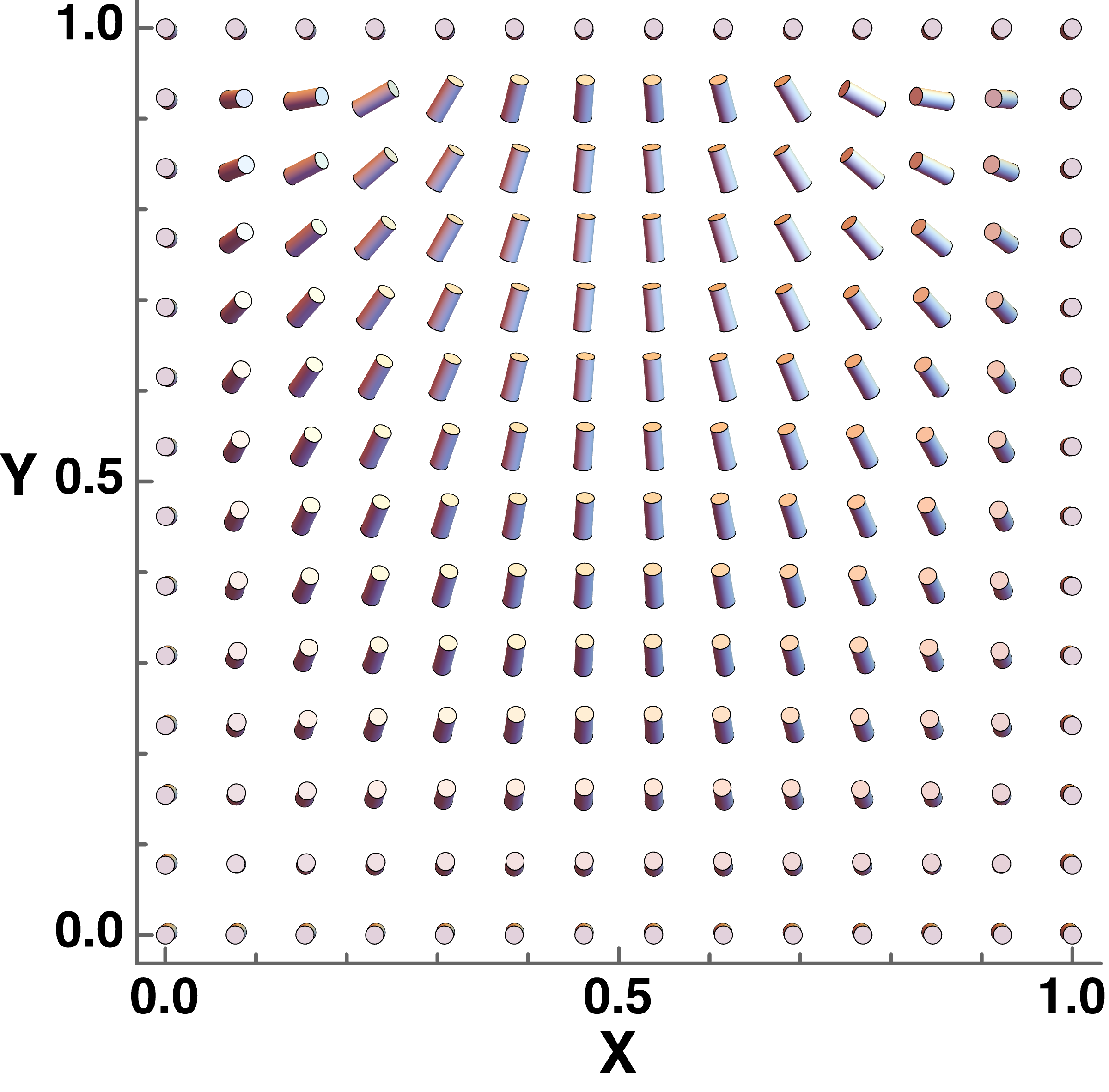} \\ \vspace{-0.25cm}
  \caption{}
  \label{2DSquareRefinement:left1}
\end{subfigure}
\begin{subfigure}{0.33 \textwidth}
	\includegraphics[width=0.99\textwidth, center]{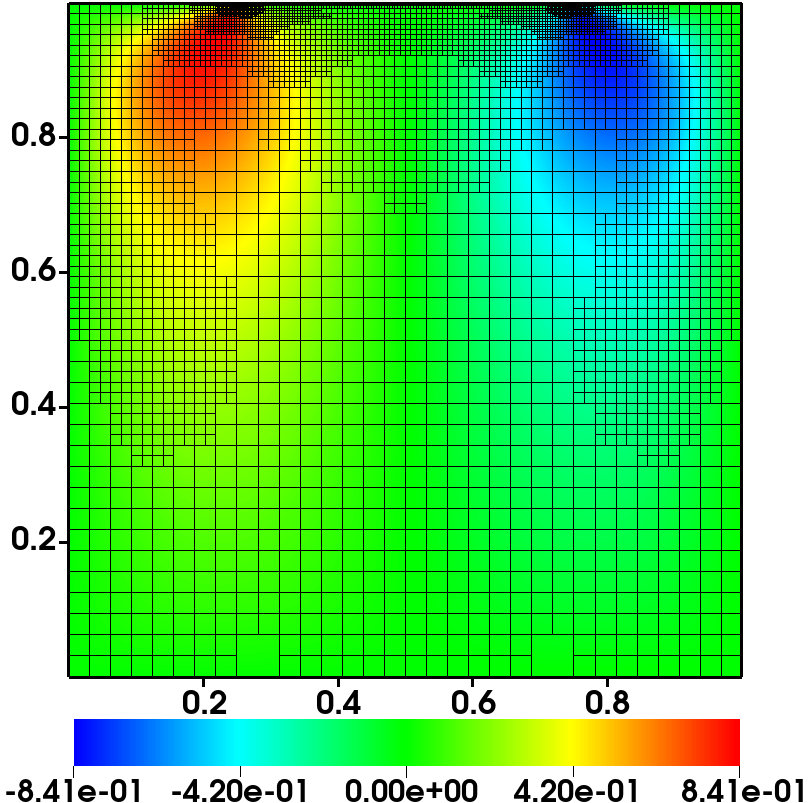}
  \caption{}
  \label{2DSquareRefinement:center1}
\end{subfigure}
\begin{subfigure}{0.33 \textwidth}
	\includegraphics[width=0.99\textwidth, center]{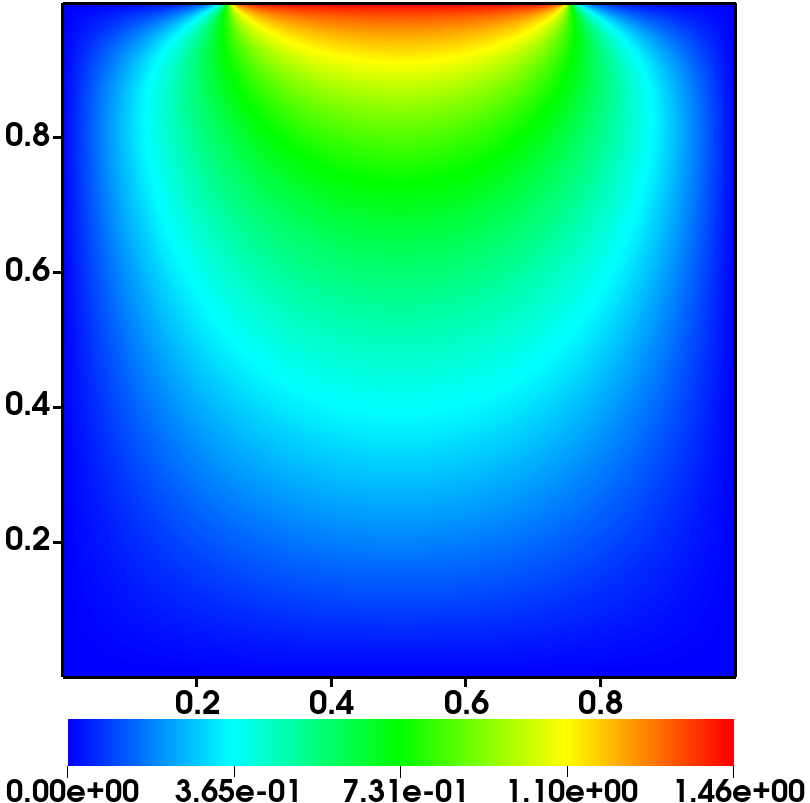}
  \caption{}
  \label{2DSquareRefinement:right1}
\end{subfigure}
\caption{\small{
(\subref{2DSquareRefinement:left1})  Fine-mesh computed solution (restricted for visualization) using Dorfler AMR with $\nu = 0.1$. (\subref{2DSquareRefinement:center1}) Resulting mesh patterns after six levels of adaptive refinement overlaid on the value of $n_1$. (\subref{2DSquareRefinement:right1}) Electric potential produced by the patterned surfaces and flexoelectric coupling.
}}
 \label{2DSquareRefinement}
\end{figure}

In Table \ref{SquareElectricAMRStats}, the AMR solution exhibits slightly tighter unit-length conformance compared to the finest uniform mesh. Furthermore, the solution computed with AMR matches the free energy found with uniform refinement. As in the previous example, the sharp boundary conditions of the electric potential lead to relatively large Gauss' law conformance values. However, conformance of the AMR solution is less than half that of the uniform mesh solution, indicating better physical law conformance. Furthermore, by each measure of computational cost, adaptive refinement yields significant reductions. The solves using AMR consume roughly $12$ times fewer WUs and produce solutions more than an order of magnitude faster. 

\begin{table}[ht!]
\centering
\caption{\small{Statistics associated with the flexoelectric problem comparing solutions computed with AMR and Dorfler marking to those applying uniform refinement. Numbers correspond to each approaches finest mesh. The first two columns are the largest director deviations above and below unit-length at the quadrature nodes. The third column is the free energy, as expressed in Equation \eqref{flexofunctional}. ``Gauss'' quantifies a solutions conformance to Gauss' law.
}}
\begin{tabular}{cccccccc}
 Strategy & Pos. Dev. & Neg. Dev. & Energy & Gauss & DOFs & WUs & Time\\
\toprule
Uniform & $4.447$e-$02$ & $2.700$e-$02$ & $-38.0412$ & $110.417$ & $4,202,500$ & $4.720$ & $1,876$s \\
Adaptive & $4.439$e-$02$ & $2.695$e-$02$ & $-38.0412$ & $37.130$ & $336,628$ & $0.384$ & $57.2$s
\end{tabular}
\label{SquareElectricAMRStats}
\end{table} 

Statistics associated with the hierarchy of NI meshes are reported in Table \ref{SquareElectricNIStats}. While the DOFs scale much more rapidly with uniform refinement, the computed free energies remain comparable. As in the previous experiment, AMR solutions yield similar H1 errors compared to the uniform mesh solutions with significantly less DOFs. For instance, the seventh AMR solutions yields a smaller H1 error while using nearly $50$ times fewer DOFs than the finest uniform mesh. It should be noted that as error becomes more equally distributed, uniform refinement on subsequent meshes emerges as the optimal strategy. However, a maximum proportion of $(1-\nu)$ cells may be refined at a given level with Dorfler marking, preventing fully uniform refinement for a given mesh. As an illustration of this phenomenon, a final step of uniform refinement in the ``Adaptive'' run, delineated as Mesh 8$^*$, is performed. This refinement cuts the error by nearly 80\%, far outpacing the error of the finest uniform mesh while still utilizing an order of magnitude fewer DOFs.

\begin{table}[ht!]
\centering
\caption{\small{A comparison of the DOFs, Newton steps required for convergence, free energy, and H1 error (relative to the overkill solution) across the NI hierarchy for uniform and adaptive mesh refinement, respectively. H1 Est. denotes the total estimator across all elements and $\Theta_T$ represents the maximum estimator value for each mesh.
}}
\resizebox{\textwidth}{!}{
\begin{tabular}{c|llcc|llccll}
\multicolumn{1}{c}{Strategy} & \multicolumn{4}{c}{Uniform} & \multicolumn{6}{c}{Adaptive} \\
\toprule
Mesh & DOFs & Steps & Energy & H1 Error & DOFs & Steps & Energy & H1 Error & H1 Est. & $\Theta_T$ \\
\midrule
1 & $4,356$ & $108$ & $-34.4431$ & $3.41832$ & $4,356$ & $108$ & $-34.4431$ & $3.41832$ & $54.553$ & $25.111$  \\
2 & $16,900$ & $44$ & $-37.3751$ & $1.60086$ & $4,972$ & $45$ & $-37.3177$ & $1.67276$ & $36.379$ & $16.382$ \\
3 & $66,564$ & $15$ & $-38.0264$ & $0.57271$ & $6,052$ & $15$ & $-38.0143$ & $0.62242$ & $19.454$ & $8.019$ \\
4 & $264,196$ & $8$ & $-38.0354$ & $0.15594$ & $8,416$ & $8$ & $-38.0318$ & $0.20374$ & $7.495$ & $2.146$ \\
5 & $1,052,676$ & $3$ & $-38.0402$ & $0.04803$ & $15,356$ & $4$ & $-38.0394$ & $0.07603$ & $2.692$ & $0.366$ \\
6 & $4,202,500$ & $3$ & $-38.0412$ & $0.01347$ & $35,180$ & $3$ & $-38.0410$ & $0.02822$ & $1.052$ & $0.061$ \\
7 & -- & -- & -- & -- & $86,700$ & $2$ & $-38.0412$ & $0.01038$ & $0.168$ & $0.004$ \\
8$^*$ & -- & -- & -- & -- & $336,628$ & $2$ & $-38.0412$ & $0.00235$ & -- & -- \\
\end{tabular}}
\label{SquareElectricNIStats}
\end{table} 

\section{Conclusion and Future Work} \label{conclusions}

In this work, we have discussed an a posteriori error estimator for the electrically and flexoelectrically coupled Frank-Oseen models of nematic liquid crystals with the necessary unit-length constraint enforced via a penalty method. It was shown that the proposed coupled estimator provides a reliable estimate of global approximation error and is an efficient indicator of local error. The estimator is comprised of readily computable, local quantities suitable for use as part of standard cell flagging schemes, including Dorfler marking. Finally, numerical results suggest that it is highly effective in guiding AMR as part of a multilevel nested iteration framework. For each numerical experiment considering flexoelectrically coupled problems with challenging applied electric fields or boundary conditions, AMR with NI produces solutions of better quality and significant reductions in computational cost along a number of metrics. Future work will include extending the theoretical framework to demonstrate reliability and efficiency of the error estimator associated with a Lagrange multiplier formulation. Further, studies incorporating novel multigrid schemes and marking strategies will also be considered. Work investigating whether the present theory can be extended to support irregular meshes, such as those with hanging nodes, as considered in the numerical experiments, is also planned.

\section*{Acknowledgments}

The authors would like to thank Professor Xiaozhe Hu for his helpful suggestions, guidance, and compute resources for the overkill solutions. We would also like to thank Divya Sivasankaran for her careful reading of the manuscript. Finally, thank you to the anonymous reviewers whose suggestions improved this work.

\bibliographystyle{etna}

\bibliography{etna_bib}

\end{document}